\documentclass[11pt,a4paper]{article}

\usepackage[utf8]{inputenc}

\usepackage{amsmath,amssymb,amsthm}
\usepackage{amsfonts}
\usepackage{amsmath}
\usepackage{mathrsfs}
\usepackage{array}
\usepackage{float}
\usepackage{pgfplots}
\usepackage{tikz}
\usetikzlibrary{arrows.meta, calc, patterns}

\usepackage{array, longtable, booktabs}
\usepackage{amssymb}
\usepackage{ragged2e}  

\newcolumntype{L}[1]{>{\RaggedRight}p{#1}}
 
\usepackage{mathrsfs}

\usepackage{enumitem}

\usepackage{pgfplots}
\usetikzlibrary{patterns, decorations.pathreplacing, calc}
\usepgfplotslibrary{fillbetween}
\pgfplotsset{compat=1.18}

\usepackage{amsmath}  
\usepackage{tabularx}  
\usepackage{booktabs}  
\usepackage{color}
\usepackage{ascmac}
\usepackage{dsfont}

\usepackage{cite}

\usepackage{comment}
\usepackage{todonotes}

\usepackage[title]{appendix}

\usepackage{geometry}
\theoremstyle{definition}
 \newtheorem{dfn}{Definition}[section]
 \newtheorem{remark}[dfn]{Remark}
  \newtheorem{rmk}[dfn]{Remark}

\theoremstyle{plain}
 \newtheorem{thm}[dfn]{Theorem}
 \newtheorem{prop}[dfn]{Proposition}
 \newtheorem{lem}[dfn]{Lemma}
 \newtheorem{cor}[dfn]{Corollary}

\usepackage{hyperref}

\newcommand{\bn}{{\mathbf n}}

\newcommand{\ba}{{\mathbf a}}
\newcommand{\bb}{{\mathbf b}}

\newcommand{\bA}{{\mathbf A}}

\newcommand{\bF}{{\mathbf F}}

\newcommand{\bU}{{\mathbf U}}

\newcommand{\dv}{{\rm div}\,}

\newcommand{\BN}{{\mathbb N}}

\newcommand{\CP}{{\mathcal P}}
\newcommand{\CQ}{{\mathcal Q}}

\newcommand{\fp}{{\mathfrak p}}

\newcommand{\be}{{\mathbf e}}

\newcommand{\pd}{\partial}

\usepackage{xcolor}

\newcommand{\pa}{\partial}
\newcommand{\wt}{\widetilde}

\newcommand{\am}{{\rm am}\,}
\newcommand{\dn}{{\rm dn}\,}

\numberwithin{equation}{section}

\usepackage{empheq}

\usepackage{accents}

\newcommand{\R}{{\mathbb R}}

\newcommand{\Rs}{\R_{\star}^2}

\usepackage{scalerel,stackengine}
\stackMath
\newcommand\reallywidehat[1]{%
\savestack{\tmpbox}{\stretchto{%
  \scaleto{%
    \scalerel*[\widthof{\ensuremath{#1}}]{\kern-.6pt\bigwedge\kern-.6pt}%
    {\rule[-\textheight/2]{1ex}{\textheight}}
  }{\textheight}%
}{0.5ex}}%
\stackon[1pt]{#1}{\tmpbox}%
}

\newcommand{\vertiii}[1]{{\left\vert\kern-0.25ex\left\vert\kern-0.25ex\left\vert #1 
    \right\vert\kern-0.25ex\right\vert\kern-0.25ex\right\vert}}

\begin{document}

\title{\bf  Local profiles of self-similar solutions of the planar stationary Navier--Stokes equations}


\author{Ming Li$^1$\thanks{
Email: mingli\_happy@163.com},
~Linyu Peng$^2$\thanks{
Email: l.peng@mech.keio.ac.jp},
~Ping Zhang$^{3,4}$\thanks{
Email: zp@amss.ac.cn},
~and Xin Zhang$^5$\thanks{
Email: xinzhang2020@tongji.edu.cn
}
\vspace{0.4cm}
\\
{\it 1. School of Mathematical Sciences,
Tongji University,}\\
{\it No.1239, Siping Road, Shanghai 200092, China}\\
{\it 2. Department of Mechanical Engineering, Faculty of Science and Technology, } \\
{\it Keio University, Yokohama 223-8522, Japan}\\
{\it 3. State Key Laboratory of Mathematical Sciences,}\\
{\it Academy of Mathematics $\&$ Systems Science, The Chinese Academy of
	Sciences,}\\
{\it  Beijing 100190, China}\\
    {\it 4. School of Mathematical Sciences,
University of Chinese Academy of Sciences,}\\
{\it Beijing 100049, China}\\
{\it 5. School of Mathematical Sciences,}\\
{\it Key Laboratory of Intelligent Computing and Applications (Ministry of Education),} \\
{\it Tongji University, 
No.1239, Siping Road, Shanghai 200092, China}
}


\maketitle
\begin{abstract}
In this paper, we revisit self-similar solutions of the two-dimensional stationary incompressible Navier--Stokes equations under scaling symmetries, also known as \emph{Jeffery--Hamel solutions}. We investigate the local patterns of smooth Jeffery--Hamel solutions in a conical subdomain $\Omega$ with vertex at the origin, \emph{without} imposing any boundary conditions on $\Omega$. For  radial Jeffery--Hamel solutions, we obtain \emph{all} the explicit local profiles in $\Omega$ with \emph{arbitrary} opening angles. In the non-radial case, we show that some Jeffery--Hamel solutions can be obtained via solving a Li\'enard equation, and we derive new explicit local profiles expressible in terms of Weierstrass elliptic functions. 

\vskip1pc\noindent
Subjclass[2020]: Primary: 35Q30; Secondary: 76N10 \vskip0.5pc\noindent
keywords: Stationary Navier--Stokes equations, Jeffery--Hamel solutions, local profiles.
\end{abstract}

\tableofcontents

\section{Introduction}
\subsection{Model and history}
Let $\R_{\star}^2:=\R^2\backslash \{(0,0)\}$. This paper concerns the stationary incompressible Navier--Stokes (SNS) equations in some cone $\Omega\subset \R_{\star}^2$ with its vertex at the origin, especially for the case with no external force: 
\begin{align}
\label{eq:NS_0}
    \begin{cases}
       \dv (\bU \otimes \bU) -\Delta \bU + \nabla p = 0,\\
      \dv  \bU = 0,
    \end{cases}
\end{align}
where the unknowns $\bU=\big(u(x,y),v(x,y)\big)$ and $p=p(x,y)$ denote the velocity vector and the pressure of the fluids at a point $(x,y)\in \Omega$, respectively. In \eqref{eq:NS_0}, the symbol $\ba\otimes \bb$ stands for the $2\times 2$ matrix $[a_jb_k]$ for any vectors $\ba,\bb \in \R^2$, and the $j$th component of the vector $\dv \bA$ is $\sum_{k=1}^2 \pd_k A_{jk}$ for any $2\times 2$ matrix $\bA=[A_{jk}]$.

It is well known that the system \eqref{eq:NS_0} admits the following scaling invariance property, that is, 
\begin{equation}\label{eq:scasym}
    \big( \bU_{\lambda}(x,y),p_{\lambda}(x,y)\big)
    :=\big( \lambda \bU (\lambda x, \lambda y), \lambda^2 p(\lambda x, \lambda y) \big)
\end{equation}
also satisfies \eqref{eq:NS_0} for any $\lambda>0$,
whenever $(\bU, p)$ solves \eqref{eq:NS_0}. We are interested in \emph{self-similar solutions} of \eqref{eq:NS_0} with respect to \eqref{eq:scasym} satisfying the following condition 
\begin{equation}\label{eq:ss}
(\bU, p)=(\bU_{\lambda},p_{\lambda}) \quad   \text{for any}\,\,\,\lambda>0.
\end{equation}

For the study of self-similar solutions of \eqref{eq:NS_0} in $\Omega=\Rs$, let us first recall a direct approach due to {\v{S}}ver{\'a}k \cite{sverak2011landau}. An immediate observation from the condition \eqref{eq:ss} is that the values of the self-similar solutions of \eqref{eq:NS_0} are totally determined by their traces on the circle
$$\mathbb{S}^1:= \big\{(x,y)\in \Rs: \sqrt{x^2+y^2}=1\big\}.$$
Thus, it is reasonable to use the polar coordinates  $(x,y)=(r\cos \theta, r\sin \theta)$ for $r:=\sqrt{x^2+y^2}$ and $\theta\in (-\pi,\pi]$.
Denoting
$\be_{r}:=(\cos \theta, \sin \theta)^{\top}$ and $\be_{\theta}:=(-\sin \theta, \cos \theta)^{\top}$,
the self-similar solution $(\bU, p)$ of \eqref{eq:NS_0} then admits the following form
\begin{equation}\label{eq:Up}
    \bU = \frac{f(\theta)}{r} \be_{r} +\frac{g(\theta)}{r} \be_{\theta}, \quad p=\frac{\fp (\theta)}{r^2}
\end{equation}
in view of \eqref{eq:ss}.  Here, the functions $f,g$ and $\fp$ are scalar about $\theta$ satisfying the ordinary differential equations (ODEs; see \cite[System (12)]{sverak2011landau}):
\begin{align*}
    \begin{cases}
       (\fp-2f)' = 0,\\
      -f''+gf'-f^2-g^2-2\fp = 0,\\
      g'=0.
    \end{cases}
\end{align*}
The nontrivial\footnote{Namely, the velocity field $\bU=(u,v) \not\equiv 0.$}
smooth self-similar solutions of \eqref{eq:NS_0} are exactly characterized by the so-called \emph{Jeffery--Hamel solutions} due to the pioneering studies of exact general solutions of \eqref{eq:NS_0} by Jeffery \cite{jeffery1915two} and Hamel \cite{hamel1917spiralformige}.

\begin{dfn}[Jeffery--Hamel solutions in $\Rs$]
\label{def:JH}
We call that $(\bU,p)$ in the form of \eqref{eq:Up} a Jeffery--Hamel solution of \eqref{eq:NS_0} in $\Rs$ if either of the following conditions holds:
    \begin{itemize}
    \item $f(\theta)$ and $g(\theta)$ are constants. In other words, for any constants $C_1,C_2\in \R$,
\begin{equation}\label{sol:CC}
    \bU =\frac{C_1}{r} \be_{r} +\frac{C_2}{r} \be_{\theta} 
    =  \frac{1}{x^2+y^2}\begin{pmatrix}
    C_1 x-C_2 y \\
   C_1 y+C_2 x
    \end{pmatrix}, \quad p=- \frac{C_{1}^2+C_{2}^2}{2(x^2+y^2)} .
\end{equation}

\item The flow is purely radial, that is, 
\begin{equation}\label{cdt:g=0}
 g(\theta)\equiv 0.
\end{equation}
In this case, there exists a $2\pi/n$-periodic ($n\in \BN$) elliptic function $f(\theta)$ (proved by \cite[Theorem 2]{sverak2011landau}) such that for any $\theta_0\in\mathbb{R}$,
\begin{equation}\label{sveark}
    \bU =\frac{f(\theta-\theta_0)}{r} \be_{r}
    =  \frac{1}{x^2+y^2}\begin{pmatrix}
    f(\theta-\theta_0) x \\
   f(\theta-\theta_0)y
    \end{pmatrix}, \quad 
    p= \frac{2f(\theta-\theta_0)}{x^2+y^2}+\frac{C_1}{x^2+y^2} .
\end{equation}
In addition, the flux $\Phi:=\int_{\mathbb{S}^1}f\,d\theta$ satisfies $4+\Phi/\pi <n^2.$

\end{itemize}
\end{dfn}

\begin{rmk}
Let us make some further comments on the self-similar solutions of \eqref{eq:NS_0} in $\Rs$.
    \begin{itemize}
        \item  The velocity field $\bU$ in \eqref{sol:CC} leads to an explicit potential function $\psi(x,y)=-C_1 \theta +C_{2} \ln r$ such that $\bU =\nabla^{\perp} \psi$ with $\nabla^{\perp}:=(-\pd_y,\pd_x)^{\top}$,
which has been observed by Jeffery \cite[Equation (III)]{jeffery1915two}. 

\item The work \cite{sverak2011landau} proved that all the nontrivial smooth self-similar solutions of \eqref{eq:NS_0} are the Jeffery--Hamel solutions. Besides, in the three-dimensional case,  \cite{sverak2011landau} also proved that the nontrivial smooth  self-similar solutions of the stationary Navier--Stokes equations are \emph{Landau solutions}  \cite{landau1944new}.

\item  \cite[Theorem~1.1]{guillod2015generalized} generalized the form of the solutions in \eqref{sveark} under the additional rotation symmetry. 
    \end{itemize}
\end{rmk}

In the current study, we will mainly focus on Jeffery--Hamel solutions in conical subdomains of $\Rs$.

\subsection{Main results}

Inspired by the works \cite{sverak2011landau,guillod2015generalized,bang2024self}, we consider self-similar solutions of \eqref{eq:NS_0} in a cone domain $\Omega \subset \R_{\star}^2$ \emph{without} prescribing conditions on the boundary $\partial\Omega$. Exploiting the symmetry structure \eqref{eq:ss}, 
\eqref{eq:NS_0} can be reduced to a one-dimensional nonlinear ODE given by \eqref{eq:zua-c}, with $\mathcal{F}(z)=0$.
For the \emph{radial} stationary self-similar solutions, the reduced problem \eqref{eq:tilu} can be completely solved, similarly to \cite{sverak2011landau,guillod2015generalized,bang2024self}, from which we obtain a \emph{complete} classification of the local patterns of smooth radial self-similar solutions as in the following theorem.

\begin{thm}[Local profiles of radial self-similar solutions in small cones] \label{thm:main-1}
Let $\Omega \subset \Rs$ be an open cone with vertex at the origin such that 
\begin{equation}\label{cdt:Omega_y}
 \Omega \cap \{(x,y)\in \Rs \mid x=0\} = \emptyset.   
\end{equation}
Let $(u,v,p)\in C^{\infty}(\Omega)^3$ be a nontrivial
local self-similar solution of \eqref{eq:NS_0} in $\Omega$, that additionally satisfies the radial condition
\begin{equation}\label{cdt:xnot0}
  xv = yu, \quad \forall \,\,(x,y)\in\Omega .
\end{equation}
Then $(u,v,p)$ takes the following form
\begin{equation*}
  u(x,y) = \frac{x}{x^2+y^2}\,\kappa, \quad 
  v(x,y) = \frac{y}{x^2+y^2}\,\kappa, \quad
  p(x,y) = \frac{2}{x^2+y^2}\,\kappa + \frac{C_1}{x^2+y^2},
\end{equation*}
where the function $\kappa=\kappa(x,y)$ is given in Table~\ref{tab:kap} (see Subsection~\ref{subsec:summary}) and $C_1$ is a real constant.
\end{thm}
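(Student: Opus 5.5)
The plan is to reduce the statement to a single ODE for an angular profile and then integrate it completely. Self-similarity \eqref{eq:ss} gives $\bU(x,y)=\lambda\bU(\lambda x,\lambda y)$ and $p=\lambda^2p(\lambda\cdot)$. Hence, in polar coordinates on the cone $\Omega=\{\theta\in I\}$ (with $I$ an open interval), we can write $\bU=\frac{f(\theta)}{r}\be_r+\frac{g(\theta)}{r}\be_\theta$ and $p=\fp(\theta)/r^2$. The radial condition \eqref{cdt:xnot0} says exactly that $\bU\parallel\be_r$, so $g\equiv0$. Setting $\kappa:=f(\theta)$ then gives $u=x\kappa/(x^2+y^2)$ and $v=y\kappa/(x^2+y^2)$. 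Because of \eqref{cdt:Omega_y}, $x\neq0$ on $\Omega$, so $\kappa$ may be regarded as a function of $t=y/x$ (equivalently of $\theta$). This is presumably how the paper's reduced equation \eqref{eq:tilu} is set up.

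Next I would plug this ansatz into \eqref{eq:NS_0}; this is the computation of \cite{sverak2011landau}, but done locally on $I$ rather than on $\mathbb S^1$. Incompressibility holds automatically, since $\dv(f\be_r/r)=0$. The $\be_\theta$-component of the momentum equation gives $(\fp-2f)'=0$, so $\fp=2f+C_1$. This yields the stated pressure formula. The $\be_r$-component then becomes $f''+f^2+4f+2C_1=0$, i.e.\ $f''+4f+f^2=c$ for some constant $c$. Multiplying by $f'$ and integrating gives the first integral
\begin{equation*}
(f')^2=-\tfrac23 f^3-4f^2+2cf+d=:P(f),
\end{equation*}
a cubic with negative leading coefficient. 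Every smooth local solution on $I$ is therefore either constant (a root of $f^2+4f-c$, which is also a critical point of $P$) or satisfies $\theta-\theta_0=\pm\int df/\sqrt{P(f)}$ on subintervals where $f'\neq0$. By uniqueness for the second-order ODE, such a solution extends analytically across its turning points.

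The classification then proceeds according to the root structure of $P$: three distinct real roots, a double root, a triple root, or one real root. After the affine change $f=-6\wp-2$ (up to normalization), the equation becomes $(\wp')^2=4\wp^3-g_2\wp-g_3$, so the nonconstant solutions are $f(\theta)=-6\wp(\theta-\theta_0;g_2,g_3)-2$. In each real case this is rewritten via Jacobi functions ($\mathrm{sn}^2$, $\mathrm{cn}^2$, $\mathrm{dn}^2$ depending on which real root interval $f$ oscillates in or escapes through). The degenerate cases give trigonometric or hyperbolic $\sec^2$, $\mathrm{sech}^2$, $\csc^2$-type profiles and the rational profile $-6/(\theta-\theta_0)^2-2$. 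Finally $\theta$ is replaced by $\arctan(y/x)$ (or $\theta+\pi$ when $x<0$) to express $\kappa(x,y)$, and each entry is matched against Table~\ref{tab:kap}.

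The main obstacle I expect is this case analysis, not the reduction. The aim is to show that the list is exhaustive. Because there is no periodicity and no boundary condition, one must keep unbounded profiles that blow up at the edge of $I$, in addition to the periodic ones of \cite{sverak2011landau}. These include the branch of $\wp$ near its pole, and also solutions living on the unbounded component $f<e_{\min}$ when $P$ has three real roots. Each such profile is allowed as long as $I$ avoids its poles, which is what produces the "arbitrary opening angle" families. To organize this I would treat $P$ as a phase-plane energy and list all its real orbits: closed orbits, separatrices, unbounded orbits, and equilibria. Then I would verify that each orbit corresponds to exactly one row of the table, with the constants $c,d$ and the shift $\theta_0$ as free parameters.
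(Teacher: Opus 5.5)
Your proposal is correct and is essentially the paper's proof. The paper reaches the same autonomous equation $f''+f^2+4f+2C_1=0$ through the Lie invariants $z=y/x$, $\widetilde u=xu$ and $h=(z^2+1)\widetilde u$, with $\arctan z$ appearing only in the quadrature, instead of through the polar ansatz; it then uses the same cubic first integral and splits into cases by the root configuration of $h^3+6h^2+6C_1h+C_2$, which is your orbit-by-orbit enumeration. Your normalization $f=-6\wp-2$ only repackages the paper's case-by-case Jacobi-amplitude substitutions, but note two things: the bounded orbits in region II require $\theta_0$ shifted by the imaginary half-period, and in the $\Gamma_+$ branch you should get the additive constant $-2-2\sqrt{2(2-C_1)}$, as in the paper's own derivation, not the value printed in Table~\ref{tab:kap}.
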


\begin{rmk}
\label{rmk:main-1}
Let us give some comments on Theorem \ref{thm:main-1}.
\begin{itemize}
    \item 
    Theorem \ref{thm:main-1} essentially gives all possible branches of the radial self-similar solutions in the small cone region $\Omega$ where we do not require the solutions to satisfy any boundary conditions on $\partial \Omega$ or the domain $\Omega$ is symmetric with respect to the $x$-axis like two inclined wall problem in \cite{bang2024self}.

    \item The restriction \eqref{cdt:Omega_y} is harmless. Our method also works for the case where $\Omega$ does not intersect with the $y$-axis. In such a situation, one may come across the exactly same ODE problem as in Theorem \ref{thm:main-1} by using another change of variables (see Subsection \ref{subsec:reciprocal}).

    \item We can verify that the condition \eqref{cdt:xnot0} implies \eqref{cdt:g=0} which characterizes the radial stationary self-similar solutions. In this case, the reduced ODE problem (see \eqref{eq:tilu} in Section~\ref{sec:symm}) can be integrated in closed form in a similar manner to the discussion in the whole plane \cite{sverak2011landau,guillod2015generalized}.      
\end{itemize}

\end{rmk}

By Theorem \ref{thm:main-1} and Remark \ref{rmk:main-1}, all the smooth local profiles of  radial self-similar solutions of \eqref{eq:NS_0} have been given whenever the angle $\alpha(\Omega)$ of the cone $\Omega$ is less than $\pi$. As a direct application of Theorem \ref{thm:main-1}, if $\alpha(\Omega) \in (\pi,2\pi],$ then the corresponding smooth local profiles of the radial self-similar solutions of \eqref{eq:NS_0} in $\Omega$ can be regarded as the extension of the ones in Theorem \ref{thm:main-1}. More precisely, we have the following result with a proof given in Section \ref{sec:proof}.

\begin{thm}[Local profiles of radial self-similar solutions in large cones]
\label{thm:main-2}
 Let $\Omega \subset \Rs$ be an open cone with the vertex at the origin and the opening angle 
$\alpha(\Omega)\in(\pi,2\pi].$
Let $(u,v,p)\in C^{\infty}(\Omega)^3$ be a nontrivial
local self-similar solution of \eqref{eq:NS_0} in $\Omega$ additionally satisfying the radial condition
\begin{equation*}
  xv = yu, \quad \forall\,\, (x,y)\in\Omega .
\end{equation*}
Then $(u,v,p)$ takes the following form
\begin{equation}\label{sol:elliptic}
\begin{aligned}
u(x,y) &= \frac{x}{x^2+y^2}\,\wt\kappa(x,y), \quad
v(x,y) = \frac{y}{x^2+y^2}\,\wt\kappa(x,y),\\
p(x,y) &= \frac{2}{x^2+y^2}\,\wt\kappa(x,y) + \frac{C_1}{x^2+y^2},
\end{aligned}
\end{equation}
where $\wt\kappa(x,y)$ is (at most) the real-analytic extension of some $\kappa$ in Theorem \ref{thm:main-1} to $\Omega$, and $C_1\in\R$ is a constant.

Moreover, if the solutions given by \eqref{sol:elliptic} can be smoothly extended to $\Rs,$ then they exactly coincide with the classical radial Jeffery--Hamel solutions in Definition \ref{def:JH}. 
\end{thm}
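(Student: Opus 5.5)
We reduce Theorem~\ref{thm:main-2} to Theorem~\ref{thm:main-1} through a covering-and-continuation argument. The opening angle satisfies $\alpha(\Omega)>\pi$, so $\Omega$ always meets the $y$-axis and Theorem~\ref{thm:main-1} cannot be applied to it directly. We therefore cover $\Omega$ by finitely many overlapping subcones $\Omega_1,\dots,\Omega_m$, each with opening angle less than $\pi$, ordered consecutively in $\theta$ so that $\Omega_j\cap\Omega_{j+1}$ is a nonempty open cone.

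On each $\Omega_j$ we apply either Theorem~\ref{thm:main-1} or its rotated/reciprocal variant from Remark~\ref{rmk:main-1} and Subsection~\ref{subsec:reciprocal}. The variant covers subcones that meet the $y$-axis but avoid the $x$-axis. Alternatively, we use the rotation invariance of \eqref{eq:NS_0}, which preserves the radial condition $xv=yu$, and conjugate by a rotation. Either way, on each $\Omega_j$ the solution has the form \eqref{sol:elliptic}, with some $\kappa_j$ from Table~\ref{tab:kap} and a constant $C_1^{(j)}$.

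The key step is to glue these local descriptions. We write everything in polar form: the radial condition gives $\bU=f(\theta)r^{-1}\be_r$ with $g\equiv0$, and $\kappa_j$ is just $f$ restricted to the angular interval of $\Omega_j$. The reduced system $(\fp-2f)'=0$, $-f''-f^2-2\fp=0$ shows that $\fp-2f$ is one global constant $C_1$ on the connected angular interval $I$ of $\Omega$. Hence $f$ solves the autonomous ODE $f''+f^2+4f+2C_1=0$ on all of $I$.

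This ODE has analytic coefficients, so $f$ is real-analytic on $I$. On each overlap the local profiles $\kappa_j$ and $\kappa_{j+1}$ coincide on an open set, and therefore belong to the same analytic branch. It follows that $\wt\kappa:=f$ is the real-analytic continuation of any single $\kappa_1$ from Theorem~\ref{thm:main-1} to all of $\Omega$. The explicit closed forms, such as Weierstrass or trigonometric expressions, continue analytically as long as the solution stays finite. Smoothness of $(u,v,p)$ on $\Omega$ excludes poles inside $I$, which is why the statement says ``at most'' an extension.

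For the final claim, suppose the solution extends smoothly to $\Rs$. Then $f$ is a smooth $2\pi$-periodic solution of the ODE above, i.e.\ of the reduced Jeffery--Hamel system with $g\equiv0$. By \v{S}ver\'ak's classification (\cite[Theorem 2]{sverak2011landau}), recalled in Definition~\ref{def:JH}, such $f$ are exactly the $2\pi/n$-periodic elliptic (or constant) profiles in \eqref{sveark}. So the extended solutions coincide with the classical radial Jeffery--Hamel solutions; the constant case is included in \eqref{sol:CC} with $C_2=0$.

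The main obstacle is verifying that the table-based profiles from different subcones genuinely patch together. The changes of variables (the $y/x$ reduction versus the reciprocal one) must produce the same global constant $C_1$ and a compatible first integral. Our plan handles this by working intrinsically with $f(\theta)$ and the ODE uniqueness theorem: since $f$ and $f'$ agree at one point of an overlap, they agree on the whole connected interval. We would check carefully that this intrinsic argument does not depend on the coordinate patch used in Theorem~\ref{thm:main-1}.
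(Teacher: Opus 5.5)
Your proposal is correct, but it reaches both conclusions by a different route than the paper.

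\textbf{Continuation statement.} The paper never writes a global equation on the angular interval. It gets analyticity of $\kappa$ locally from the explicit formulas in Table~\ref{tab:kap}. Then, in Proposition~\ref{prop:ext}, it propagates $\kappa=f_j\circ\wt\theta$ from $Q_I$ across $R_{\pi/2}$, $R_{\pi}$, $R_{3\pi/2}$. This uses unique continuation of real-analytic functions together with a maximal-cone argument that excludes strong singularities.

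Your observation that $\fp-2f\equiv C_1$ and $f''+f^2+4f+2C_1=0$ hold on the whole interval $I$ at once is more intrinsic. It gives global analyticity, a single constant $C_1$, and compatibility of the $y/x$ and $x/y$ patches (whose pressure laws have the same form by \eqref{eq:zva-b}) all for free. The covering by subcones is then only needed to identify $f$ with a table entry on one subcone avoiding the $y$-axis.

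\textbf{Second statement.} The paper does not simply cite \v{S}ver\'ak. It derives the matching conditions $f_j^{(n)}(0)=f_j^{(n)}(2\pi)$ (Corollary~\ref{cor:ext_Rs}). It eliminates $\kappa_2,\kappa_6,\kappa_7$ through these conditions and $\kappa_1,\kappa_4,\kappa_5$ through forced strong singularities. It then proves in Proposition~\ref{prop:exist} and Lemma~\ref{lem:cos} that $\kappa_3$ extends exactly under the quantization \eqref{cdt:K}.

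Your citation of \cite[Theorem 2]{sverak2011landau} is legitimate and much shorter, provided the extension is understood, as in the paper, as a smooth radial self-similar solution on $\Rs$. However, it turns the claim into a restatement of the whole-plane classification. The paper's case analysis is what ties the local table to the global family, showing explicitly that only $\kappa_0$ and $\kappa_3$ with \eqref{cdt:K} survive. It also serves as an independent check of \v{S}ver\'ak's result in the radial case.
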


According to Theorems \ref{thm:main-1} and \ref{thm:main-2} above, all smooth
local profiles of radial self-similar solutions of \eqref{eq:NS_0} in a cone $\Omega$ have now
been completely clarified. However, the genuinely non-radial regime is considerably more delicate.
For non-radial flows, the condition \eqref{cdt:xnot0} is replaced by
\begin{equation}\label{cdt:C_0}
   xv = yu + C_0, \quad \forall\,\,(x,y)\in\Omega,
\end{equation}
for some real constant $C_0\neq 0$. 
The following theorem states that some local profiles of the non-radial Jeffery--Hamel solutions are determined by a Li\'enard equation.

\begin{thm}[Local structure of non-radial self-similar solutions]
\label{thm:abel-reduction}
Let $\Omega \subset \Rs$ be an open cone with the vertex at the origin satisfying \eqref{cdt:Omega_y}.
Let $(u,v,p)\in C^{\infty}(\Omega)^3$ be a nontrivial local self-similar solution of \eqref{eq:NS_0} in $\Omega$ satisfying the non-radial condition \eqref{cdt:C_0} for some real constant $C_0\neq 0$. 
Then, the following non-radial solutions exist.
\begin{itemize}
    \item $(u,v,p)$ is the local profile of the Jeffery--Hamel solution \eqref{sol:CC};
\item  For any $\wt C_1 \in \mathbb{R}$, let us rewrite local profiles of the Jeffery--Hamel solution \eqref{sol:CC} as
\begin{equation}\label{eq:U_L}
\begin{aligned}
u_{L}(x,y):= \frac{\wt C_1 x-C_0 y}{x^2+y^2},\quad 
v_{L}(x,y) := \frac{ \wt C_1 y +C_0 x}{x^2+y^2},\quad
p_{L}(x,y) := -\frac{\wt C_1^2+C_0^2}{2(x^2+y^2)}.
\end{aligned}
\end{equation}
Then, $(u,v,p)$ given by
\begin{equation*}
\begin{aligned}
u(x,y) &= u_{L}(x,y)+\frac{x}{x^2+y^2}\,H\big(\arctan \,(y/x)\big), \\
v(x,y) &= v_{L}(x,y)+\frac{y}{x^2+y^2}\,H\big(\arctan \,(y/x)\big),\\
p(x,y) &= p_{L}(x,y)+\frac{2}{x^2+y^2}\,H\big(\arctan \,(y/x)\big)
\end{aligned}
\end{equation*}
is also a solution, where
 $H=H(\theta)$ ($\theta=\arctan\,(y/x)$) is a solution of the Li\'enard equation (see also equation \eqref{eq:Heq})
\begin{equation}\label{eq:H_pm}
\frac{d^2 H}{d\theta^2}-C_{0}\frac{d H}{d\theta} + H^{2} + (2 \wt C_1+4)\,H=0.
\end{equation}
\end{itemize}
\end{thm}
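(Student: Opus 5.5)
The plan is to verify the statement directly in polar coordinates by reducing \eqref{eq:NS_0} to Šverák's ODE system with a nonzero constant angular component. The first bullet is the special case $H\equiv 0$ of the second, so it suffices to treat the second.

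\textbf{Step 1 (polar form).} By \eqref{cdt:Omega_y}, $\Omega$ lies in one half-plane $\{x>0\}$ or $\{x<0\}$. There $\theta=\arctan(y/x)$ is a smooth angular coordinate, equal to the polar angle up to a constant shift that is irrelevant for the autonomous ODE below. For $H\in C^\infty$ on the angular interval of $\Omega$, the proposed velocity reads
\[
\bU=\frac{f(\theta)}{r}\be_r+\frac{g(\theta)}{r}\be_\theta,\qquad f=\wt C_1+H,\quad g\equiv C_0 .
\]
This matches \eqref{eq:Up}, and $xv-yu=rg=C_0$ recovers \eqref{cdt:C_0}. The proposed pressure reads $p=\fp(\theta)/r^2$ with
\[
\fp=2H-\tfrac12(\wt C_1^2+C_0^2).
\]

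\textbf{Step 2 (reduced system).} Self-similar fields of the form \eqref{eq:Up} solve \eqref{eq:NS_0} on $\Omega$ iff $(f,g,\fp)$ solve, on the angular interval, the system quoted from \cite{sverak2011landau}:
\[
g'=0,\qquad (\fp-2f)'=0,\qquad -f''+gf'-f^2-g^2-2\fp=0 .
\]
This equivalence is a local computation, so it holds on any cone and not only on $\Rs$. I would re-derive it quickly rather than just cite it, to confirm it holds for constant $g\ne0$. The computation is: divergence gives $g'/r^2$; the $\be_\theta$-momentum gives $(\fp-2f)'$ plus terms proportional to $g'$; the $\be_r$-momentum gives the third equation. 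This check is the only real obstacle. One must make sure the quoted system carries no hidden assumption $g=0$, and that the sign of the $gf'$ term is right, since that sign fixes the sign of the damping coefficient $-C_0$ in \eqref{eq:H_pm}.

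\textbf{Step 3 (substitution).} The first equation holds since $g=C_0$. The second holds since
\[
\fp-2f=-\tfrac12(\wt C_1^2+C_0^2)-2\wt C_1
\]
is constant. For the third, substitute $f=\wt C_1+H$, $g=C_0$ and the above $\fp$, then expand. The constant terms cancel:
\[
-\wt C_1^2-C_0^2+(\wt C_1^2+C_0^2)=0 .
\]
The linear terms combine to $-(2\wt C_1+4)H$. The third equation becomes
\[
-\big(H''-C_0H'+H^2+(2\wt C_1+4)H\big)=0,
\]
which is exactly \eqref{eq:H_pm}.

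\textbf{Step 4 (conclusion).} Hence any solution $H$ of the Liénard equation on the angular range of $\Omega$ yields a smooth solution of \eqref{eq:NS_0} satisfying \eqref{cdt:C_0}. Such $H$ exist locally by Cauchy–Lipschitz theory, though possibly only on a subcone if blow-up occurs. Taking $H\equiv0$ gives \eqref{eq:U_L}, i.e.\ the profile \eqref{sol:CC} with $C_1=\wt C_1$, $C_2=C_0$, and $p_L$ matches the pressure in \eqref{sol:CC}. This proves both bullets.
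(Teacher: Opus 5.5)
Your verification is correct, but it takes a different route from the paper. The paper does not use {\v{S}}ver{\'a}k's polar system in its proof. It works with the Cartesian invariants $z=y/x$, $h=(z^2+1)\wt u$ and derives \eqref{eq:h_C0} from \eqref{eq:zua-c}. It then notes that under \eqref{eq:con} this equation has the linear particular solution $h_L=-C_0z+\wt C_1$, with $\wt C_1$ as in \eqref{eq:td_C1}. It subtracts $h_L$ and turns \eqref{eq:hr_pm1} into the Li\'enard equation via $z=\tan\theta$.

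You instead substitute $f=\wt C_1+H$, $g\equiv C_0$, $\fp=2H-\frac12(\wt C_1^2+C_0^2)$ into the polar system and expand. The check you flag in Step 2 goes through. Without assuming $g'=0$, the polar equations are $g'=0$, $\fp'-2f'-g''+gg'=0$ and $-f''+gf'-f^2-g^2+2g'-2\fp=0$. So for constant $g$ the quoted system is exact, and the $+gf'$ sign is right, giving the damping term $-C_0H'$.

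Your route is shorter, and it explains where the paper's linear solution comes from. Since $h=f-g\tan\theta$, the term $-C_0z$ in $h_L$ is just the swirl, and $h_L$ is the constant profile $f\equiv\wt C_1$. Likewise, \eqref{eq:con} is exactly the condition that $f''-C_0f'+f^2+4f+C_0^2+2C_1=0$ has a real equilibrium $\wt C_1$. This autonomous equation is what the third equation becomes with $\fp=2f+C_1$.

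What the paper's derivation stresses is the converse, which the theorem's hypothesis and title suggest. Every solution of \eqref{eq:h_C0} satisfying \eqref{eq:con} has the form $h_L+h_r$ with $h_r$ solving the Li\'enard equation. In your framework this takes one more line: given any solution with $g\equiv C_0$ and $\fp=2f+C_1$, set $H:=f-\wt C_1$. It would be worth adding.
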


Unlike the radial self-similar solutions, the ODE \eqref{eq:H_pm} is not generally integrable. Under the integrability condition \eqref{eq:CC1},  the Li\'enard equation becomes a Painlev\'e--Gambier type-II equation, yielding a non-radial self-similar solution of the SNS equations \eqref{eq:NS_0} expressed in terms of Weierstrass elliptic functions, as shown in \eqref{eq:uuuvvv}. In the degenerate case, this solution reduces to \eqref{eq:uv-special21-exp}, which is given in terms of elementary functions. Further details are provided in Section \ref{sec:nonradial}.

\section{Symmetries and radial self-similar solutions}
\label{sec:symm}
Many efforts have been made in finding exact solutions of both nonstationary and stationary Navier--Stokes equations, including self-similar solutions derived from symmetries, which are also called similarity solutions or group-invariant solutions; see, e.g., \cite{fushchych1994symmetry,okamoto2000some,sverak2011landau,guillod2015generalized,boisvert1983group,meleshko2004particular,bang2024self,kapitanskii1983group}. For instance, the invariance \eqref{eq:scasym} is associated with the symmetry under scaling transformations. For the theory of symmetry methods applied to general differential equations, the reader may refer to \cite{olver1993,bluman2010applications}. 

 In this section, we mainly focus on the (Lie point) symmetries of \eqref{eq:NS_0},
which can be equivalently written as
\begin{equation}
\label{eq:0forceNS}
	\begin{cases}
	-u_{xx} - u_{yy} + uu_x + vu_y + p_x = 0, \\
		-v_{xx} - v_{yy} + uv_x + vv_y + p_y = 0, \\
		u_x + v_y = 0.
	\end{cases}
\end{equation}
These symmetries, in particular the scaling symmetry, will be used to construct the corresponding self-similar solutions. 
Here, we used $u_x=\pd_x u$, $u_{xy}=\pd_{x}\pd_{y}u$ and so on to denote partial derivatives. 

Symmetries of the equations \eqref{eq:0forceNS} are local diffeomorphisms 
\begin{equation*}
   (x,y,u,v,p) \mapsto (\widehat{x}, \widehat{y}, \widehat{u},\widehat{v},\widehat{p}),
\end{equation*}
which depend on a real parameter $\varepsilon$  smoothly around the identity transformation for $\varepsilon=0$, and transform one solution to another solution (or to itself). To obtain symmetries,  it suffices to calculate the corresponding infinitesimal generators 
	\begin{equation*}
	\mathbf{v} = \xi \partial_x + \eta \partial_y + \phi \partial_u + \psi \partial_v + \chi  \partial_p
	\end{equation*}
 by using the linearized symmetry condition \cite{olver1993},
 where all coefficients are functions of $(x,y,u,v,p)$ defined by
 \begin{equation*}
 \xi=\frac{\operatorname{d}\!}{\operatorname{d}\!\varepsilon} \Big|_{\varepsilon=0}\widehat{x},\quad  \eta=\frac{\operatorname{d}\!}{\operatorname{d}\!\varepsilon} \Big|_{\varepsilon=0}\widehat{y},\quad  \phi=\frac{\operatorname{d}\!}{\operatorname{d}\!\varepsilon} \Big|_{\varepsilon=0}\widehat{u},\quad  \psi=\frac{\operatorname{d}\!}{\operatorname{d}\!\varepsilon} \Big|_{\varepsilon=0}\widehat{v},\quad  \chi=\frac{\operatorname{d}\!}{\operatorname{d}\!\varepsilon} \Big|_{\varepsilon=0}\widehat{p}.
 \end{equation*}

 Solving the linearized symmetry condition for the system \eqref{eq:0forceNS}, we obtain
 	\begin{equation*}
		\xi = c_1 + c_3x + c_4y, \quad 
		\eta = c_2 + c_3y - c_4x, \quad
		\phi = -c_3u + c_4v, \quad 
		\psi = -c_3v - c_4u, \quad
		\chi = -2c_3p,
	\end{equation*}
 where $c_1,c_2,c_3,c_4$ are constants. Namely, symmetries of \eqref{eq:0forceNS} are generated by the following infinitesimal generators
\begin{equation}\label{eq:sym}
     \begin{aligned}
         \mathbf{v}_1 &= \partial_x, \quad  &\text{translational symmetry along } x,\\
         \mathbf{v}_2 & = \partial_y, \quad  &\text{translational symmetry along } y,\\
         \mathbf{v}_3 & = x\partial_x+y\partial_y-u\partial_u-v\partial_v-2p\partial_p, \quad &\text{scaling symmetry}, \\
         \mathbf{v}_4 & = -y\partial_x + x\partial_y - v\partial_u + u\partial_v, \quad &\text{rotational symmetry}.
     \end{aligned}
 \end{equation}

\subsection{Reduction of the general forced stationary problem in Cartesian coordinates}

Before addressing the unforced problem \eqref{eq:0forceNS}, let us remark briefly on the extension of the symmetries \eqref{eq:sym} to the forced one:
\begin{equation}
\label{eq:NS}
	\begin{cases}
	-u_{xx} - u_{yy} + uu_x + vu_y + p_x = F_1, \\
		-v_{xx} - v_{yy} + uv_x + vv_y + p_y = F_2, \\
		u_x + v_y = 0,
	\end{cases}
\end{equation}
where $\bF(x,y)=(F_1(x,y),F_2(x,y))$ denotes the given external force.

In the current paper, we will only consider the scaling symmetry generated by the infinitesimal generator $\mathbf{v}_3$ in \eqref{eq:sym}. The corresponding transformation now reads
\begin{equation*}
    \widehat{x}=x\exp\,(\varepsilon), \quad \widehat{y}=y\exp\,(\varepsilon),\quad 
    \widehat{u}=u\exp\,(-\varepsilon),\quad \widehat{v}=v\exp\,(-\varepsilon),\quad 
    \widehat{p}=p\exp\,(-2\varepsilon).
\end{equation*}
Consequently, the terms on the left-hand side of the first two equations in \eqref{eq:NS} are homogeneous of order $-3$.  Therefore, the system \eqref{eq:NS} is again scaling invariant if $\mathbf{F}$ is homogeneous of order $-3$, i.e., 
$$\mathbf{F}(\lambda x,\lambda y)=\lambda^{-3}\mathbf{F}(x,y), \quad  \lambda\neq0.$$

In the cone-shaped domain $\Omega \subset \Rs$ with vertex at the origin and \emph{not intersecting with the $y$-axis}, the invariants with respect to $\mathbf{v}_3$ can be chosen as
\begin{equation}\label{eq:invs}
    z:=y/x,\quad 
    \widetilde{u}(z):=xu(x,y),\quad 
    \widetilde{v}(z):=xv(x,y), \quad 
    \widetilde{p}(z):=x^2p(x,y)
\end{equation}
for any $(x,y)\in \Omega$, and we define
\begin{equation*}
        \wt{\mathbf{F}}(z)=x^3\mathbf{F}(x,y) =\mathbf{F}(1,z).
\end{equation*}
Then using \eqref{eq:NS}, the new variables $\widetilde{u}$, 
$\widetilde{v}$, $\widetilde{p}$ and $\widetilde{\bF}$ satisfy the following reduced ODEs:
\begin{equation} \label{eq:redsnsfor}
    \begin{cases}
    (z^2 + 1)\widetilde{u}_{zz} + z\widetilde{u}\widetilde{u}_z - \widetilde{u}_z\widetilde{v} + 4z\widetilde{u}_z + \widetilde{u}^2 + 2\widetilde{u} + z\widetilde{p}_z + 2
	\widetilde{p} = -\wt F_1(z), \vspace{0.1cm}\\
	(z^2 + 1)\widetilde{v}_{zz} + z\widetilde{u}\widetilde{v}_z - \widetilde{v}_z\widetilde{v} + 4z\widetilde{v}_z + \widetilde{u}\widetilde{v} + 2\widetilde{v} - \widetilde{p}_z = -\wt F_2(z), \vspace{0.1cm}\\
	z\widetilde{u}_z + \widetilde{u} - \widetilde{v}_z = 0.
    \end{cases}
\end{equation}
The last equation in \eqref{eq:redsnsfor} gives 
\begin{equation}\label{C_0-neq-0}
    \widetilde{v} = z\widetilde{u} +C_0
\end{equation}
for some real number $C_0$. 
Now, we substitute \eqref{C_0-neq-0} into \eqref{eq:redsnsfor} to get 
\begin{subequations}
\begin{empheq}[left=\empheqlbrace]{align}
 &(z^2+1)\widetilde{u}_{zz} + (4z-C_0)\widetilde{u}_z
 + \widetilde{u}^2 + 2\widetilde{u}
 + z\widetilde{p}_z + 2\widetilde{p}
 = -\wt F_1(z),\label{eq:zua_a}  \vspace{0.1cm} \\ 
&(z^3+z)\widetilde{u}_{zz}
 + (-C_0 z + 6z^2 + 2)\widetilde{u}_z
 + z\widetilde{u}^2 + 6z\widetilde{u}
 + 2C_0 - \widetilde{p}_z
 = -\wt F_2(z).\label{eq:zua_b}
\end{empheq}
\end{subequations}
Multiplying \eqref{eq:zua_b} by $z$ and adding the result to \eqref{eq:zua_a} eliminates the term $z\widetilde{p}_z$, yielding
\begin{equation}\label{eq:zub+a}
  \begin{aligned}
    \widetilde{p}=-\frac{1}{2}\Big( (z^2+1)^2\widetilde{u}_{zz} +( 6z-C_0)(z^2+1)\widetilde{u}_z+(z^2+1)\widetilde{u}^2 + (6z^2+2)\widetilde{u}+\wt{F}_1(z)+z\wt{F}_2(z)\Big).
\end{aligned} 
\end{equation}

On the other hand, multiplying \eqref{eq:zua_a} by $z$ and subtracting \eqref{eq:zua_b} eliminates the term $\widetilde{u}_{zz}$, leading to
\begin{equation*}
\frac{d}{dz} \left( (z^2+1)\widetilde{p}-2(z^2+1)\widetilde{u}\right)=\wt{F}_2(z)-z\wt{F}_1(z).
\end{equation*}
Integrating this equation yields
\begin{equation}\label{eq:zua-b}
    \widetilde{p}=2\widetilde{u} +\frac{2C_0z}{z^2+1}+\frac{1}{z^2+1}
    \left( C_1+\int_{z_0}^z \left(  \wt{F}_2(w)-w\wt{F}_1(w)\right)dw \right)
\end{equation}
for some real numbers $C_1$ and $z_0$. 

At last, equating \eqref{eq:zub+a} and \eqref{eq:zua-b}, we get the following reduced ODE of $\wt{u}:$
\begin{equation}\label{eq:zua-c}
   \begin{split}
& (z^2+1)^2\widetilde{u}_{zz} + (6z-C_0)(z^2+1)\widetilde{u}_{z} 
+ (z^2+1)\widetilde{u}^2 + 6(z^2+1)\widetilde{u} \\
&\hspace{5.5cm}+\frac{2C_0(z^3+3z)}{z^2+1}
+\frac{2C_1}{z^2+1} + \mathcal{F}(z)=0   
\end{split}
\end{equation}
where we have set 
\begin{equation*}
    \mathcal{F}(z):=\wt{F}_1(z)+z\wt{F}_2(z)
    +\frac{2}{z^2+1}\int_{z_0}^z \left(  \wt{F}_2(w)-w\wt{F}_1(w)\right)dw.
\end{equation*}

\subsection{Radial self-similar solutions: Scaling symmetry}
\label{subsec:solve}
In this subsection, we will calculate radial self-similar solutions of the unforced SNS equations \eqref{eq:0forceNS} in a cone-shaped domain $\Omega \subset \Rs$ with vertex at the origin and not intersecting with the $y$-axis. 
 Taking $\wt{\bF}=0$ and $C_0=0$ in \eqref{eq:zua-b} and \eqref{eq:zua-c},  the pressure law and the corresponding reduced ODE of \eqref{eq:0forceNS} respectively read
\begin{equation*}
    \widetilde{p}=2\widetilde{u} +\frac{C_1}{z^2+1}
\end{equation*}
and 
\begin{equation}
\label{eq:tilu}
(z^2+1)^2\widetilde{u}_{zz} + 6z(z^2+1)\widetilde{u}_{z} 
+ (z^2+1)\widetilde{u}^2 + 6(z^2+1)\widetilde{u} 
+ \frac{2C_1}{z^2+1}= 0
\end{equation}
for some real constant $C_1$.

To solve \eqref{eq:tilu}, let us define 
\begin{equation*}
  h := (z^2+1) \widetilde{u},
\end{equation*}
from which we have  
\begin{equation}\label{sol:3}
    u(x,y)= \frac{\widetilde{u}(z)}{x} = \frac{h(z)}{x(z^2+1)}=\frac{x}{x^2+y^2} h\left(\frac{y}{x}\right).
\end{equation}
Hence, it suffices to derive the formula of $h$ instead of $\wt{u}$.
\medskip

Now, we substitute $\widetilde{u}=(z^2+1)^{-1}h$ into \eqref{eq:tilu} and obtain 
\begin{equation}\label{h_1}
(z^2+1)^2h_{zz}+2z(z^2+1)h_z+h^2+4h+2C_1=0.
\end{equation}
Multiplying both sides of \eqref{h_1} by $2h_z$ and integrating it, we obtain
\begin{equation*}
(z^2+1)^2h_z^2+\frac{2}{3}h^3+4h^2+4C_1h+\frac{2}{3} C_2=0   
\end{equation*}
for some real constant $C_2$, which implies 
\begin{equation}\label{I_0}
  h_z =\pm \frac{\sqrt{6}}{3}  \frac{\sqrt{-h^3-6h^2-6C_1h-C_2}}{z^2+1}
\end{equation}
so long as 
\begin{equation*}
 \CP_3(h):= h^3+6h^2+6C_1h+C_2 \leq 0.  
\end{equation*}
 \medskip
 
First of all, a \emph{singular solution} can be obtained by assuming  $\CP_3(h)\equiv 0$. 
In this situation, it is not hard to see that  
\begin{equation*}
 h (z)\equiv C   
\end{equation*}
for some real number $C$ satisfying $\CP_3(C)=0$. 
Therefore, the unforced problem \eqref{eq:0forceNS} admits the following solution:
\begin{equation}\label{eq:sinsol}
  u(x,y)=\frac{Cx}{x^2+y^2}, \quad 
  v(x,y)= \frac{Cy}{x^2+y^2},\quad 
  p(x,y)=  -\frac{C^2}{2(x^2+y^2)}
\end{equation}
in view of the equalities \eqref{cdt:xnot0}, \eqref{eq:zub+a} and \eqref{sol:3}.
Moreover, such $C$ in \eqref{eq:sinsol} can  run throughout the real line by inserting \eqref{eq:sinsol} into \eqref{eq:NS_0}.

Next, assuming $\CP_3(h)<0$,
the general solution of \eqref{I_0} can be written in the quadrature form
\begin{equation}\label{eq:I_C_1}
I:=\int_{h_0}^h  \frac{d \wt{h}}{\sqrt{-\wt{h}^3-6\wt{h}^2-6C_1\wt{h}-C_2}}=\pm \frac{\sqrt{6}}{3}\arctan\, z+C_{3} 
\end{equation}
for some (suitable) real constants $h_0$ and $C_{3}$.  
Let $\Delta_{\rm cubic}$ denote the discriminant of the following cubic equation 
\begin{equation}\label{eq:cubic_C_1}
\CP_3(h)=h^3+6h^2+6C_1h+C_2=0, 
\end{equation}
namely, 
$$\Delta_{\rm cubic}:=-27\left( C_2^2+(-24C_1+32)C_2+32C_1^3-48C_1^2\right).$$
In addition, we regard the form 
\begin{equation*}
   \CQ_2(C_2):= -C_2^2+(24C_1-32)C_2-32C_1^3+48C_1^2
\end{equation*}
as a quadratic polynomial for $C_2$, whose discriminant is
 $$\Delta_{\rm square}:=-128 (C_1-2)^3.$$
 
 The solvability of \eqref{eq:I_C_1} is divided into the following several cases depending on the value of the parameters $C_1$ and $C_2$. However, most of the solutions of \eqref{eq:I_C_1} can not be formulated by the elementary functions. In fact, we will mainly use elliptic integrals. More precisely, $\mathrm{am}\,(m, k)$ denotes the \emph{Jacobi amplitude function} with argument $m$ and modulus $k$, which is the inverse of the elliptic integral of the first kind satisfying 
\begin{equation}\label{eq:am}
    m = \int_0^{\mathrm{am}\,(m,k)} \frac{d\theta}{\sqrt{1 - k^2 \sin^2\theta}}
    =F\left(\mathrm{am}\,(m,k),k\right),
\end{equation}  
and  $K(k)=F(\pi/2,k)$ denotes the \emph{complete elliptic integral of the first kind};
 the \emph{incomplete elliptic integral of the first kind} is defined as
\begin{equation*}
F(\phi,k):=\int^{\phi}_{0}\frac{1}{\sqrt{1-k^2\sin^2\theta}}\,d\theta.
\end{equation*}

{\bf Case 1.} Suppose $C_1>2$. In this case, we have 
\begin{equation*}
    \Delta_{\rm square}<0 \quad \text{and}\quad \Delta_{\rm cubic}<0,
\end{equation*}
which implies the equation \eqref{eq:cubic_C_1} has two complex conjugate roots $m \pm n i$ and one real root $\alpha$ for $\alpha,m,n\in \mathbb{R}$. Then equating 
\begin{equation*}
\CP_3(h)=(h-\alpha)(h-m - ni)(h-m + ni)
\end{equation*}
provides us with the following conditions:
\begin{equation}\label{eq:mn}
    \begin{aligned}
    n\neq 0,\quad -2m-\alpha=6,\quad m^2+n^2+2\alpha m=6C_1,\quad -\alpha(m^2+n^2)=C_2.
    \end{aligned}
\end{equation}

Now, for any\footnote{Indeed, we may firstly take $h_0=-N$ and then verify the limit as $N$ tends to the infinity. In this sense, we put $h_0=-\infty$ if there is no confusion.}
$-\infty=h_0<h< \alpha$, we introduce the change of variables
    \begin{equation*}
     \theta = {\rm \,arccot\,}  \sqrt{\frac{\alpha - \wt{h}}{\beta}}
     \quad \text{for}\,\,\,0<\theta<\frac{\pi}{2}
    \end{equation*}
with $\beta :=\sqrt{(m-\alpha)^2+n^2}>0$.
Immediately, we have 
\begin{equation*}
    d\wt{h} = 2\beta  \frac{\cos\theta}{\sin^3\theta} \, d\theta, \quad 
\alpha- \wt{h} = \beta \cot^2\theta.
\end{equation*}    
Moreover, note that 
\begin{align*}
(\wt{h}-m)^2+n^2 &=\beta^2 (1+\cot^4 \theta) +2 (m-\alpha)\beta \cot^2 \theta\\
&=\frac{\beta^2}{\sin^4 \theta}- 2 (\beta-m+\alpha)\beta \cot^2 \theta\\
&= \frac{\beta^2}{\sin^4 \theta} \left( 1- 2 (\beta-m+\alpha)\beta^{-1}  \sin^2 \theta \cos^2 \theta  \right),
\end{align*}
which provides us with 
\begin{equation}\label{eq:I_1}
 \begin{aligned}
    I &= \int_{-\infty}^{h} \frac{d\wt{h}}{\sqrt{(\alpha-\wt{h})\left( (\wt{h}-m)^2+n^2 \right)}} \\
    &= \frac{2}{\sqrt{\beta}}
    \int_{0}^{{\rm \,arccot\,}  \sqrt{\frac{\alpha - h}{\beta}}} 
    \frac{d\theta}{\sqrt{1- 2 (\beta-m+\alpha)\beta^{-1}  \sin^2 \theta \cos^2 \theta }} \\
     &=\frac{1}{\sqrt{\beta}}F\left(2 {\rm\, arccot\,} \sqrt{\frac{\alpha-h}{\beta}},\sqrt{\frac{\beta-m+\alpha}{2\beta}}\,\right).
\end{aligned}   
\end{equation}
Then combining \eqref{eq:I_C_1} and \eqref{eq:I_1} yields
\begin{equation}\label{eq:hhh}
    h(z) = \alpha - \beta \cot^2\left( \frac{1}{2} \mathrm{am}\left(   \frac{\sqrt{6\beta}}{3} \arctan \,z +\sqrt{\beta}C_{4}, \sqrt{\frac{\beta - m + \alpha}{2\beta}} \,\,\right) \right)
\end{equation}
for some real constant $ C_{4}\in\mathbb{R}$. 
\medskip

{\bf Case 2.} Suppose $C_1=2$, i.e., $\Delta_{\rm square}=0$ and $\Delta_{\rm cubic}=-27(C_2-8)^2$. Then the calculation of the integral $I$ depends on the choice of $C_2$.
\begin{itemize}
    \item If $C_2 =8$,  then $\Delta_{\rm cubic}=0$, and \eqref{eq:cubic_C_1} has three identical real roots $-2$, that is, 
\begin{equation*}
    \CP_3(h)=(h+2)^3=0.
\end{equation*}
Thus, it is easy to see that
\begin{equation*}
 I=\int_{-\infty}^h \frac{d\wt{h}}{\sqrt{-(\wt{h}+2)^3}} = \frac{2}{\sqrt{-(h+2)}}  
\end{equation*}
for any $-\infty=h_0<h<-2$, which, together with \eqref{eq:I_C_1}, yields 
\begin{equation*}
    h(z)= -2 \,- \,\frac{6}{\left(  \arctan\, z + C_{4} \right)^2}
\end{equation*}
for some $C_{4}\in\mathbb{R}$.

    \item  If $C_2 \neq 8$, then $\Delta_{\rm cubic}<0$. Thus, analogous to {\bf Case 1}, the equation \eqref{eq:cubic_C_1} has two complex conjugate roots $m \pm n i$ and one real root $\alpha$ satisfying 
\begin{equation*}
    \begin{aligned}
        n\neq 0, \quad -2m-\alpha=6,\quad 
        m^2+n^2+2\alpha m=12,\quad -\alpha(m^2+n^2)=C_2.
    \end{aligned}
\end{equation*}
Furthermore, we can solve \eqref{eq:I_C_1} in a similar manner and obtain 
the same form \eqref{eq:hhh} with the parameters $C_1=2$ and $C_2\neq 8$.
\end{itemize}
\medskip

{\bf Case 3.} Suppose $C_1 <2$ and then $\Delta_{\rm square}>0$. However, the value of $\Delta_{\rm cubic}$ can be positive, zero, or negative.

\begin{itemize}
    \item Note that $\Delta_{\rm cubic}>0$ whenever $$ 12C_1-16-4(2-C_1)\sqrt{2(2-C_1)}<C_2<12C_1-16+4(2-C_1)\sqrt{2(2-C_1)}.$$ 
Now, the equation \eqref{eq:cubic_C_1} has three distinct real roots $a<b<c$. 
Then the integral
\begin{equation*}
    I=\int_{h_0}^{h} \frac{d\wt{h}}{\sqrt{-(\wt{h}-a)(\wt{h}-b)(\wt{h}-c)}}
\end{equation*}
is valid for $b<h<h_0\leq c$ or $h<h_0\leq a$.

\begin{itemize}
    \item If $b<h<h_0\leq c$,  then we introduce the change of variables
    \begin{equation*}
     \theta = \arcsin \sqrt{\frac{c - \wt{h}}{c - b}} \quad \text{for}\,\,\,0<\theta<\frac{\pi}{2}.
    \end{equation*}
Now, taking $h_0=c$ and  using the facts that
\begin{align*}
d\wt{h} &= -2(c - b)\sin\theta\cos\theta \, d\theta, &
c - \wt{h} &= (c - b)\sin^2\theta, \\
\wt{h} - a &= (c - a) - (c - b)\sin^2\theta, &
\wt{h} - b &= (c - b)\cos^2\theta,
\end{align*}
we can rewrite $I$ in the form of an elliptic integral as follows
\begin{equation*}
\begin{aligned}
I&=-\int_{h}^{c} \frac{d\wt{h}}{\sqrt{(\wt{h} - a)(\wt{h} - b)(c - \wt{h})}} \\
&= -2\int^{\arcsin\left( \sqrt{\frac{c-h}{c-b}}\right)}_{0} \frac{d\theta}{\sqrt{(c - a) - (c - b)\sin^2\theta}}\\
&=\displaystyle -\frac{2}{\sqrt{c-a}} 
F \left( \arcsin \sqrt{\frac{c-h}{c-b}}, \sqrt{\frac{c-b}{c-a}}\, \right).
\end{aligned}
\end{equation*}
Then \eqref{eq:I_C_1} and \eqref{eq:am} amount to
\begin{equation*}
    \begin{aligned}
        h(z)&=\displaystyle c - (c - b) \sin^2\left( \mathrm{am}\left( 
        \sqrt{\dfrac{c-a}{6}} \arctan\, z +\sqrt{c-a}\, C_{4}, \sqrt{\frac{c-b}{c-a}} \,\right) \right)
    \end{aligned}
\end{equation*}
for some $C_{4}\in\mathbb{R}$. 

\item 
If $h<h_0\leq a$, then we use the change of variables 
\begin{equation*}
\theta = \arctan\, \sqrt{\frac{a - \wt{h}}{b -a }}
 \quad \text{for}\,\,\,   0<\theta<\frac{\pi}{2},
\end{equation*}
which implies that 
\begin{align*}
d\wt{h} &=-\frac{2(b-a)\sin\theta}{\cos^3\theta} \, d\theta, &
a - \wt{h} &= (b - a)\tan^2\theta, \\
b - \wt{h} &= \frac{b - a}{\cos^2\theta}, &
c - \wt{h} &= (c-a)+ (b-a)\tan^2\theta.
\end{align*}
Consequently, we have 
\begin{align*}
I&=-\int_{h}^{a} \frac{d \wt{h}}{\sqrt{(a -\wt{h})(b - \wt{h})(c - \wt{h})}} \\
&= -2\int_{0}^{\arctan\, \left( \sqrt{\frac{a-h}{b-a}}\right)} \frac{d\theta}{\sqrt{(c - a) - (c - b)\sin^2\theta}}\\
&=\displaystyle -\frac{2}{\sqrt{c-a}} F \left( \arctan\, 
         \sqrt{\frac{a-h}{b-a}}, \sqrt{\frac{c-b}{c-a}} \right).
\end{align*}
Finally, \eqref{eq:I_C_1} and \eqref{eq:am} imply that
\begin{equation*}
    \begin{aligned}
        h(z)&=\displaystyle a - (b - a) \tan^2\left( \mathrm{am}\left( 
        \sqrt{\dfrac{c-a}{6}}  \arctan\, z +\sqrt{c-a}\,C_{4}, \sqrt{\frac{c-b}{c-a}} \,\right) 
        \right)
    \end{aligned}
\end{equation*}
for some $C_{4}\in\mathbb{R}$. 
\end{itemize}

\item If we take
\begin{equation*}
  C_2 =12C_1-16+4(2-C_1)\sqrt{2(2-C_1)}  
\end{equation*}
or 
\begin{equation*}
C_2=12C_1-16-4(2-C_1)\sqrt{2(2-C_1)},
\end{equation*}
then $\Delta_{\rm cubic}=0$ and the equation \eqref{eq:cubic_C_1} has three real roots and two of them are identical.

\begin{itemize}
    \item If $C_2=12C_1-16+4(2-C_1)\sqrt{2(2-C_1)}$, then we see that
\begin{align*}
   \CP_3(h)&=\left(h+2+2\sqrt{2(2-C_1)}\right)\left(h+2-\sqrt{2(2-C_1)}\right)^2\\
&   =(h-a)(h-b)^2
\end{align*}
for
\begin{equation}\label{def:ab}
 a:=-2-2\sqrt{2(2-C_1)}< b:=-2+\sqrt{2(2-C_1)}.
\end{equation}
Therefore, 
\begin{align*}
I&=-\int_{h}^a \frac{d\wt{h}}{(-\wt{h}+b)\sqrt{-\wt{h}+a}} \\
&=-\frac{2}{\sqrt{b-a}}\arctan\,\sqrt{\frac{a-h}{b-a}},
\end{align*}
for $h<h_0:=a$, and  \eqref{eq:I_C_1} gives
\begin{equation*}
   h(z)=\displaystyle -2-2\sqrt{2(2-C_1)}  
-3\sqrt{2(2-C_1)} \tan^2\left(\sqrt[4]{\frac{2-C_1}{2}}\arctan\, z + \sqrt[4]{2-C_1} C_{4} \right)   
\end{equation*}
for some $C_{4}\in\mathbb{R}$. 

\item 
If $C_2=12C_1-16-4(2-C_1)\sqrt{2(2-C_1)}$, then we have
\begin{align*}
      \CP_3(h)=\left(h+2-2\sqrt{2(2-C_1)}\right)\left(h+2+\sqrt{2(2-C_1)}\right)^2
     =(h-b)(h-a)^2
\end{align*}
for $a$ and $b$ defined in \eqref{def:ab}.
By direct calculations, we have 
\begin{equation*}
\begin{aligned}
I&=
\begin{cases}
\displaystyle-\int_{h}^{b} \frac{d\wt{h}}{(\wt{h}-a)\sqrt{-\wt{h}+b}}
& \text{if} \,\,\, a<h<h_0=b; \\
\displaystyle\int_{-\infty}^h \frac{d\wt{h}}{(-\wt{h}+a)\sqrt{-\wt{h}+b}} 
& \text{if} \,\,\, -\infty=h_0<h<a,
\end{cases}
\\&=\begin{cases}
     \displaystyle-\frac{1}{\sqrt{b-a}} \ln \frac{\sqrt{b-a}+\sqrt{b-h}}{\sqrt{b-a}-\sqrt{b-h}}
     & \text{if} \,\,\, a<h<h_0=b; \\
      \displaystyle \frac{1}{\sqrt{b-a}} \ln \frac{\sqrt{b-a}+\sqrt{b-h}}{\sqrt{b-h}-\sqrt{b-a}}  
      & \text{if} \,\,\, -\infty=h_0<h<a.
\end{cases}
\end{aligned}
\end{equation*}
Substituting the $I$ above into \eqref{eq:I_C_1} yields
\begin{equation*}
    \begin{aligned}
        \displaystyle h(z) &=-2 + 2\sqrt{2(2 - C_1)} - 3\sqrt{2(2 - C_1)} \cdot \tanh^2\left( \frac{\sqrt[4]{2 - C_1}}{2}  \left( \sqrt[4]{8} \arctan\,\left(z\right) + C_4 \right) \right)\
     \end{aligned}      
\end{equation*}
if $ a<h<h_0=b$, or 
\begin{equation*}
    \begin{aligned}
            \displaystyle h(z)&= -2 + 2\sqrt{2(2 - C_1)} - 3\sqrt{2(2 - C_1)} \cdot \coth^2\left( \frac{\sqrt[4]{2 - C_1}}{2} \left( \sqrt[4]{8} \arctan\,\left(z\right) + C_4 \right) \right)\
     \end{aligned}      
\end{equation*}
if $-\infty=h_0<h<a$, with $C_{4}\in\mathbb{R}$. 

\end{itemize}

\item  At last, we assume that 
\begin{equation}\label{cdt:C12}
 C_2 >12C_1-16+4(2-C_1)\sqrt{2(2-C_1)}~~ \text{or}~~ C_2<12C_1-16-4(2-C_1)\sqrt{2(2-C_1)}.   
\end{equation}
Then $\Delta_{\rm cubic}<0$ and \eqref{eq:cubic_C_1} has two conjugate complex roots and one real root. This case is again similar to {\bf Case 1}. The solution is of the form \eqref{eq:hhh} with the parameters satisfying \eqref{eq:mn}, but $C_1$ and $C_2$ satisfy $C_1<2$ 
and \eqref{cdt:C12}.
\end{itemize}

\subsection{Summary and further comments}
\label{subsec:summary}
Summarizing the discussion above, we obtain the corresponding self-similar solutions of the SNS equations \eqref{eq:0forceNS} straightforwardly (see also Theorem \ref{thm:main-1}). More precisely, the similar solutions are written in the form:
\begin{equation*}
 u(x,y) = \displaystyle\frac{x}{x^2+y^2}\kappa(x,y),\quad 
 v(x,y) = \displaystyle\frac{y}{x^2+y^2}\kappa(x,y),\quad 
\end{equation*}
where the formula of the function $\kappa(x,y):=h(y/x)$ is summarized below.

As shown in Figure~\ref{fig:1}, the parameter plane $(C_1,C_2)$ is divided into several regions. 
For brevity, we denote
\begin{equation}\label{eq:domain}
 \begin{aligned}
 \text{I} &:= \text{I}_1 \cup\Gamma_0\cup\text{I}_2,\\
        P_0&:=\{(2,8)\},\\
         \text{II}&:=\{(C_1,C_2)\in \R^2:C_1<2, L_-(C_1)<C_2 <L_+(C_1)\},\\
        \Gamma_+&:=\{(C_1,C_2) \in \R^2: C_1<2, C_2=L_+(C_1)\},\\
        \Gamma_-&:=\{(C_1,C_2) \in \R^2: C_1<2, C_2=L_-(C_1)\},
\end{aligned}   
\end{equation}
where
\begin{equation*}
\begin{aligned}
 \text{I}_1 &:= \{(C_1,C_2)\in \R^2:C_1>2\},\\
        \Gamma_0 &:=\{(2,C_2) \in \R^2: C_2 \neq 8\}, \\ 
        \text{I}_2&:=\{(C_1,C_2)\in \R^2:C_1<2,C_2> L_+(C_1) \,\,\,\text{or}\,\,\, C_2<L_-(C_1)\},\\
     L_\pm(C_1)&:=12C_1-16\pm 4(2-C_1)\sqrt{2(2-C_1)}.
     \end{aligned}
\end{equation*}

\begin{figure}[H]
\centering
\begin{tikzpicture}
\begin{axis}[
    axis lines = middle,
    xlabel = $C_1$,
    ylabel = $C_2$,
    xmin = -2, xmax = 3,  
    ymin = -35, ymax = 20,
    domain = -2:2,         
    samples = 200,
    width=10cm,              
    height=10cm,
    clip mode=individual,
    enlargelimits=false,
    axis line style={thick}, 
    label style={font=\large}, 
    tick label style={font=\large} 
]

\fill[pattern=north east lines, pattern color=gray!30] 
    (2,\pgfkeysvalueof{/pgfplots/ymin}) rectangle (3,\pgfkeysvalueof{/pgfplots/ymax});
\node[anchor=center] at (axis cs: 2.5,-10) {\Huge\bfseries $\text{I}_1$};

\draw[dashed, thick] (2,\pgfkeysvalueof{/pgfplots/ymin}) -- (2,\pgfkeysvalueof{/pgfplots/ymax});

\addplot[name path=upper, thick, smooth, color=blue!80!black] {12*x -16 + 4*(2 - x)*sqrt(2*(2 - x))};
\addplot[name path=lower, thick, smooth, color=red!80!black] {12*x -16 - 4*(2 - x)*sqrt(2*(2 - x))};

\addplot[pattern=horizontal lines, pattern color=green!30] 
    fill between[of=upper and lower, soft clip={domain=-2:2}];
\node[anchor=center] at (axis cs: 0.5,-8) {\Huge\bfseries II};

\path[name path=axis_top] (-2,20) -- (2,20);
\path[name path=axis_bottom] (-2,-35) -- (2,-35);
\addplot[pattern=vertical lines, pattern color=orange!30]
    fill between[of=upper and axis_top];
\addplot[pattern=vertical lines, pattern color=orange!30]
    fill between[of=lower and axis_bottom];
\node[anchor=center] at (axis cs: 1,12) {\Huge\bfseries $\text{I}_2$}; 
\node[anchor=center] at (axis cs: 1.3,-25) {\Huge\bfseries $\text{I}_2$};

\node[circle, fill=red, inner sep=3pt, label={[red,font=\Large]above right:$P_0$}] at (2,8) {};

\node[anchor=north west, align=left, font=\footnotesize, 
      xshift=0mm, yshift=-70mm, fill=white, draw=black] 
at (axis description cs: 0.02,0.98) {  
\begin{tabular}{@{}r@{}l@{}}
& $\Gamma_0:$ \tikz{\draw[black,dashed] (0,0) -- (0.5,0);} \\
& $\Gamma_+:$  \tikz{\fill[blue!50] (0,0) rectangle (0.5,0.2);}  \\
& $\Gamma_-:$ \tikz{\fill[red!50] (0,0) rectangle (0.5,0.2);}  
\end{tabular}};

\end{axis}
\end{tikzpicture}
\caption{Bifurcation diagram in the parameter plane $(C_1, C_2)$}  
\label{fig:1}
\end{figure}
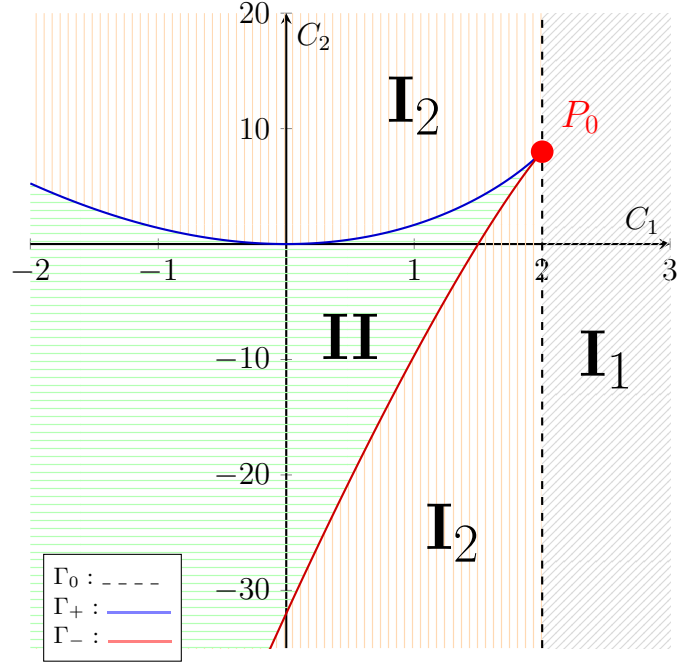

Moreover, the classification of $\kappa(x,y)=h(y/x)$ is given by the following Table \ref{tab:kap}\footnote{Here, we denote the free parameter by $C$ instead of the common integration constant $C_4$.}.
We emphasize that the values of the parameters $a$, $b$, $c$, $\alpha$, and $\beta$ may vary with respect to the choice of $(C_1,C_2)$ as in the last subsection.

\begin{longtable}{@{} L{3.8cm} L{\dimexpr\textwidth-4.5cm} @{}} 
\caption{Classification of $\kappa$}\\
\toprule
\textbf{Range of $(C_1,C_2)$ } & \textbf{Expression of} $\kappa(x,y)$ \\
\midrule
\endfirsthead

\multicolumn{2}{c}{\textbf{Table \thetable\ Continued}}\\
\toprule
\textbf{Range of $(C_1,C_2)$} & \textbf{Expression of} $\kappa(x,y)$ \\
\midrule
\endhead

\bottomrule
\multicolumn{2}{r}{Continued on next page}
\endfoot

\bottomrule
\endlastfoot


\text{Singular solution} & 
$C$ \\

\text{I} & 
$\alpha - \beta \cot^2\left(\dfrac{1}{2}\,\mathrm{am}\left(
\dfrac{\sqrt{6\beta}}{3}\arctan\,\left(y/x\right) + \sqrt{\beta}C,\ 
\sqrt{\dfrac{\beta - m + \alpha}{2\beta}}\right)\right)$ \\
\addlinespace[3mm]

$P_0$ & 
$-2 - \dfrac{6}{\left( \arctan \, (y/x) + C\right)^2}$ \\
\addlinespace[3mm]

II 
& 
$\begin{cases}
c - (c - b) \sin^2\left( \mathrm{am}\left( \sqrt{\dfrac{c-a}{6}} \arctan\,\left(y/x\right) +\sqrt{c-a} \,C, \sqrt{\dfrac{c-b}{c-a}} \right) \right)\\
\quad\text{or} \\
a - (b - a) \tan^2\left( \mathrm{am}\left( \sqrt{\dfrac{c-a}{6}} \arctan\,\left(y/x\right) +\sqrt{c-a} \,C, \sqrt{\dfrac{c-b}{c-a}} \right) \right)
\end{cases}$ \\
\addlinespace[3mm]



$\Gamma_+$ 
& $-2-\sqrt{2(2-C_1)} -3\sqrt{2(2-C_1)} \tan^2\left( \sqrt[4]{\dfrac{2-C_1}{2}}\arctan\,\left(y/x\right) + C \right)$ \\
\addlinespace[3mm]

$\Gamma_-$ 
& $\begin{cases} 
\text{\fontsize{10pt}{11pt}\selectfont $\displaystyle
-2 + 2\sqrt{2(2 - C_1)} - 3\sqrt{2(2 - C_1)} \cdot \tanh^2\left( \frac{\sqrt[4]{2 - C_1}}{2}
\left( \sqrt[4]{8} \arctan\,(y/x) + C \right) \right)
$}
\\
\quad\text{or}\\
\text{\fontsize{10pt}{11pt}\selectfont $\displaystyle
-2 + 2\sqrt{2(2 - C_1)} - 3\sqrt{2(2 - C_1)} \cdot \coth^2\left( \frac{\sqrt[4]{2 - C_1}}{2}
\left( \sqrt[4]{8} \arctan\,(y/x) + C \right) \right)
$}
\end{cases}$
\vspace{2mm}
\label{tab:kap}
\end{longtable}

\begin{remark}
It can be observed that in \text{II} and $\Gamma_{-}$, even when the free parameters ($C_1$, $C_2$) are fixed, there exist two distinct classes of solutions. This is due to different boundary conditions. 
\end{remark}

\subsection{Alternative approach using reciprocal invariants}
\label{subsec:reciprocal}
 Alternatively, we can also perform a reduction of the problem \eqref{eq:0forceNS}  using a different set of invariants:
\begin{equation}\label{eq:invs_2}
    \wt z:=x/y,\quad 
    \widetilde{u}(\wt z):=yu(x,y),\quad 
    \widetilde{v}(\wt z):=yv(x,y), \quad 
    \widetilde{p}(\wt z):=y^2p(x,y)
\end{equation}
for $(x,y)$ in a cone-shaped domain $\Omega \subset \Rs$ with vertex at the origin 
and \emph{not intersecting with the $x$-axis}. 
We shall see that the invariants \eqref{eq:invs_2} will \emph{not} give us essentially different (local) self-similar solutions of \eqref{eq:0forceNS} from the ones obtained in Subsection \ref{subsec:summary}.

Similarly, the new variables $\widetilde{u}$, 
$\widetilde{v}$ and $\widetilde{p}$ satisfy the following reduced ODE system:
 \begin{equation}\label{x/y}
 \left\{
 \begin{array}{l}
 (\wt z^2 + 1)\widetilde{u}_{ \wt z \wt z} +  \wt z\widetilde{v}\widetilde{u}_{\wt z} - \widetilde{u}_{\wt z}\widetilde{u} + 4\wt z\widetilde{u}_{\wt z} + \widetilde{v}\widetilde{u} + 2\widetilde{u} - \widetilde{p}_{\wt z} =0, \vspace{0.1cm}\\
 	(\wt z^2 + 1)\widetilde{v}_{\wt z\wt z} + \wt z\widetilde{v}\widetilde{v}_{\wt z} - \widetilde{v}_{\wt z}\widetilde{u} + 4\wt z\widetilde{v}_{\wt z} + \widetilde{v}^2 + 2\widetilde{v} + \wt z\widetilde{p}_{\wt z} + 2
 	\widetilde{p} = 0, \vspace{0.1cm}\\
 	\wt z\widetilde{v}_{\wt z} + \widetilde{v} - \widetilde{u}_{\wt z} = 0.
 \end{array}
 \right.
 \end{equation}
 
Assuming the same condition \eqref{cdt:xnot0}, the last equation in \eqref{x/y} gives
 \begin{equation}\label{eq:u=v}
 \widetilde{u} = \wt z\,\widetilde{v}.
 \end{equation}
Then substituting \eqref{eq:u=v} into \eqref{x/y} yields
\begin{subequations}
\begin{empheq}[left=\empheqlbrace]{align}
  & (\wt z^3+\wt z)\widetilde{v}_{\wt z\wt z} + (6\wt z^2+2)\widetilde{v}_z 
 + \wt z\widetilde{v}^2 + 6\wt z\widetilde{v} - \widetilde{p}_{\wt z} =0,\label{eq:zva_a}  \vspace{0.1cm} \\ 
&  (\wt z^2+1)\widetilde{v}_{\wt z\wt z} + 4\wt z\widetilde{v}_{\wt z} + \widetilde{v}^2 
  + 2\widetilde{v} + \wt z\widetilde{p}_{\wt z} + 2\widetilde{p} =0.\label{eq:zva_b}
\end{empheq}
\end{subequations}

Now, multiplying both sides of \eqref{eq:zva_a} by $\wt z$  and summing up the resulting equation and \eqref{eq:zva_b}, we obtain that
\begin{equation}\label{eq:zvb+a}
   \begin{aligned}
     \widetilde{p}=-\frac{1}{2}\left( (\wt z^2+1)^2\widetilde{v}_{\wt z\wt z} + 6\wt z(\wt z^2+1)\widetilde{v}_{\wt z} 
      +(\wt z^2+1)\widetilde{v}^2 + (6\wt z^2+2)\widetilde{v}\right).
 \end{aligned} 
 \end{equation}
 On the other hand, by multiplying both sides of \eqref{eq:zva_b} by $\wt z$ and subtracting \eqref{eq:zva_a} from the resulting equation, we have
 \begin{equation*}
 \frac{d}{d\wt z} \left( (\wt z^2+1)\widetilde{p}-2(\wt z^2+1)\widetilde{v}\right)=0,
 \end{equation*}
 which implies
\begin{equation}\label{eq:zva-b}
     \widetilde{p}=2\widetilde{v} +\frac{\wt C_1}{\wt z^2+1}
 \end{equation}
 for some real number $\wt C_1$.
 Equating \eqref{eq:zvb+a} and \eqref{eq:zva-b}, we get
\begin{equation}\label{eq:zva-c}
    (\wt z^2+1)^2\widetilde{v}_{\wt z\wt z} + 6\wt z(\wt z^2+1)\widetilde{v}_{\wt z} 
 + (\wt z^2+1)\widetilde{v}^2 + 6(\wt z^2+1)\widetilde{v} 
 +\frac{2\wt C_1}{\wt z^2+1}=0.
 \end{equation}
 Analogous to the solvability of \eqref{eq:tilu}, we define
 \begin{equation*}
   \widehat{h} := (\wt z^2+1) \widetilde{v},
 \end{equation*}
from which we can transfer \eqref{eq:zva-c} into the equation of $\widehat{h}$ as follows:
\begin{equation}\label{h_1 v}
 (\wt z^2+1)^2\widehat{h}_{\wt z\wt z}+2\wt z(\wt z^2+1)\widehat{h}_{\wt z}+\widehat{h}^2+4\widehat{h}+2\wt C_1=0.
 \end{equation}
 
Note that the forms of \eqref{h_1} and \eqref{h_1 v} are the same. Consequently, the calculations in Subsection \ref{subsec:solve} are still valid for the problem \eqref{h_1 v}.
Moreover, we obtain 
\begin{equation}\label{eq:hat_h}
\widehat{h}(\widetilde{z}) = \kappa(y, x),  \quad
 u(x,y) = \displaystyle\frac{x}{x^2+y^2}\kappa(y,x),\quad 
 v(x,y) = \displaystyle\frac{y}{x^2+y^2}\kappa(y,x),\quad 
\end{equation}
where $\kappa$ is given by Table \ref{tab:kap}. Noticing the identity 
\begin{equation*}
 \arctan \,(x/y) + \arctan\, (y/x) = \hbox{sgn}\,(x/y)\frac{\pi}{2},
\end{equation*}
the solutions \eqref{eq:hat_h} are exactly of the same form as those obtained in Subsection \ref{subsec:summary} up to the choices of free constants so long as the cone region $\Omega$ does not intersect with the $x$-axis and the $y$-axis simultaneously.

\section{Radial self-similar solutions in large cones}
\label{sec:proof}

In this section, we will complete the proof of Theorem \ref{thm:main-2}.
Roughly speaking, the last section proved that the local profiles of the radial self-similar solutions in small cones are determined by the choice of $\kappa$ in Table \ref{tab:kap}. 
For convenience, let us denote the functions in Table \ref{tab:kap} as follows:
\begin{equation}\label{eq:kappas}
    \kappa_j (x,y)=\left(f_j \circ  \theta\right) (x,y),\quad j=0,1, \dots,7,
\end{equation}
where  $\theta(x,y) = \arctan \, (y/x)$ and 
\begin{equation}\label{eq:fs}
 \begin{aligned}
 f_0(\theta)&=C,\\
f_1(\theta) & = \alpha - \beta \cot^2 \left( \dfrac{1}{2} \operatorname{am} \left( \dfrac{\sqrt{6\beta}}{3} 
\theta + \sqrt{\beta} \, C,  \sqrt{\dfrac{\beta - m + \alpha}{2\beta}}\,\, \right) \right), \\
f_2(\theta) & = -2 - \dfrac{6}{( \theta + C)^2}, \\
f_3(\theta) & = c - (c - b) \sin^2 \left( \operatorname{am} \left( \sqrt{\dfrac{c-a}{6}}  \theta + \sqrt{c-a} \, C,  \sqrt{\dfrac{c-b}{c-a}} \,\right) \right), \\
f_4(\theta) & = a - (b - a) \tan^2 \left( \operatorname{am} \left( \sqrt{\dfrac{c-a}{6}} \theta + \sqrt{c-a} \, C,  \sqrt{\dfrac{c-b}{c-a}} \,\right) \right), \\
f_5(\theta) & = -2 - \sqrt{2(2-C_1)} - 3\sqrt{2(2-C_1)} \tan^2 \left( \sqrt[4]{\dfrac{2-C_1}{2}} \theta + C \right), \\
f_6(\theta) & = -2 + 2\sqrt{2(2 - C_1)} - 3\sqrt{2(2 - C_1)}  \tanh^2\left( \frac{\sqrt[4]{2 - C_1} }{2} \left( \sqrt[4]{8} \theta + C \right) \right), \\
f_7(\theta) & =  -2 + 2\sqrt{2(2 - C_1)} - 3\sqrt{2(2 - C_1)}  \coth^2\left( \frac{\sqrt[4]{2 - C_1} }{2}  \left( \sqrt[4]{8} \theta + C \right) \right).
\end{aligned}   
\end{equation}
It should be noted that the domain of $(x,y)$ for each function $\kappa_j$ may differ and, in some cases, depends on the parameters.

\subsection{Construction of the radial flows in large cones}
In this subsection, we will study the local profiles of radial flows in some cone $\Omega$ with its angle $\alpha (\Omega) \in (\pi,2\pi].$ In particular, whenever $\alpha(\Omega)=2\pi,$ $\Omega$ should be understood as the subset $\Rs \backslash R_{\theta}$ of $\Rs$ for
$$R_{\theta} := \{(r\cos \theta ,r\sin \theta) \mid  r>0 \}$$
and for some $\theta\in [0,2\pi).$ In addition, let us denote the four quadrants of the plane as 
\begin{equation*}
        \begin{aligned}
         Q_{I} &:=\{(\wt x, \wt y)\in \Rs : \wt x>0, \,\wt y >0\}, \\ 
         Q_{II} &:=\{(\wt x, \wt y)\in \Rs : \wt x<0,\,\wt y >0\}, \\ 
          Q_{III}&:=\{(\wt x, \wt y)\in \Rs : \wt x<0, \,\wt y <0\},\\
          Q_{IV}&:=\{(\wt x, \wt y)\in \Rs : \wt x>0, \,\wt y <0\}.
        \end{aligned}
    \end{equation*}
    
According to the forms in \eqref{eq:kappas} and \eqref{eq:fs}, the possible singularities of $\kappa_j$ in \eqref{eq:kappas} may appear on the ray $R_{\theta}$ for some $\theta\in [0,2\pi).$
\begin{dfn}
Let $R_{\theta_0}$ be the ray with an angle $\theta_{0} \in [0,2\pi)$. 
Let $\Omega = \Omega_{+} \cup R_{\theta_0} \cup \Omega_{-}$ be an open simply connected cone region in $\Rs$ with its vertex at the origin such that the subsets satisfy $\Omega_{\pm} \neq \varnothing$ and $\Omega_{+} \cap \Omega_{-} = \varnothing$ (see Figure \ref{fig:ray}).  Suppose that the functions $\kappa_j \in C^{\infty}(\Omega_+\cup \Omega_-)$ ($1\leq j \leq 7$). For any $(x_0,y_0)\in R_{\theta_0}$, let us denote the non-tangential limits (if they exist) as
\begin{equation*}
      L_{\pm}^{\alpha}(x_0,y_0;\kappa_j) :=\lim_{\delta\to 0^{+}} 
      (\pd^{\alpha}\kappa_j) \big|_{(x_0,y_0) \pm \delta \bn (x_0,y_0)},
\end{equation*}
where $\bn$ is the unit normal vector on $R_{\theta_0}$ pointing from $\Omega_-$ to $\Omega_+$ and $\alpha\in \BN_0^2$.

\begin{figure}[H]
\centering
\begin{tikzpicture}[scale=1.8, >=Stealth]
    \draw[->, black] (-0.5,0) -- (3,0) node[right] {$x$};
    \draw[->, black] (0,-0.5) -- (0,3) node[above] {$y$};
    
    \fill[green!15, opacity=0.6] (0,0) -- (25:2.8) arc (25:65:2.8) -- cycle;
    \fill[red!15, opacity=0.6] (0,0) -- (65:2.8) arc (65:105:2.8) -- cycle;
    
    \draw[black] (0,0) -- (65:2.8) coordinate (R) node[above right] {$R_{\theta_0}$};
    
    \node at (45:1.3) [green!60!black] {$\Omega_+$};
    \node at (81:1.3) [red!70!black] {$\Omega_-$};
    
    \coordinate (P) at (65:2.0);
    \fill (P) circle (1.3pt) node[above left=0.5pt] {$(x_0,y_0)$};

    \draw[->, thick, black] (P) -- ++(-25:0.5) node[above right=0.5pt] {$\bn$};

    \draw[gray!40, thin] (25:2.8) arc (25:105:2.8);
    \draw[gray!40, thin, dashed] (0,0) -- (25:2.8);
    \draw[gray!40, thin, dashed] (0,0) -- (105:2.8);
    
\end{tikzpicture}
\caption{The cone domain $\Omega = \Omega_{+} \cup R_{\theta_0} \cup \Omega_{-}$}
\label{fig:ray}
\end{figure}
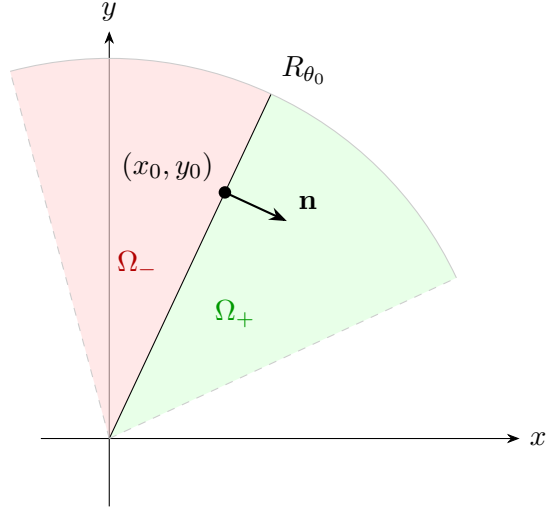

\begin{itemize}
    \item Assume that $L_{\pm}^{\alpha}(x_0,y_0;\kappa_j)$ exist for any multi-index $\alpha\in \BN_0^2$. If, in addition, there exists some  $\alpha_0\in \BN_0^2$ such that
     \begin{equation*}
L_{+}^{\alpha_0}(x_0,y_0;\kappa_j)
      \neq  L_{-}^{\alpha_0}(x_0,y_0;\kappa_j),
    \end{equation*}
    then we define $(x_0,y_0)$ as a \emph{weak} singular point of $\kappa_j$.
    
     \item If there exists some $\alpha_0\in \BN_0^2$ such that 
     \begin{equation*}
      \big|L_{+}^{\alpha_0}(x_0,y_0;\kappa_j) \big| \,\,\text{or}\,\,
       \big|L_{-}^{\alpha_0}(x_0,y_0;\kappa_j) \big| = \infty,   
      \end{equation*}
then we define $(x_0,y_0)$ as a \emph{strong} singular point of $\kappa_j$.
    \end{itemize}
\end{dfn}

\begin{remark}\label{rmk:ssp}
Consider the functions $f_j(\theta)$ defined in \eqref{eq:fs}. 
If 
\begin{equation*}
  \Big|\lim_{\theta \to \theta_0^{+}} f_j(\theta)\Big|  
  \,\,\,\text{or}\,\,\, 
  \Big| \lim_{\theta \to \theta_0^{-}} f_j(\theta)\Big|= \infty 
\end{equation*}
for some point $(x_0,y_0)$ with $ \arctan\,(y_0/x_0) = \theta_0$, then $ \kappa_j$ exhibits a \emph{strong} singularity at $(x_0,y_0)$.
In such a situation, we cannot find any smooth extension of $\kappa_j$ beyond the ray $R_{\theta_0}$ when $\kappa_j$ is well-defined for $\theta>\theta_0$ or $\theta <\theta_0$.
\end{remark}

To prove Theorem \ref{thm:main-2}, we assume, without loss of generality, that 
\begin{equation}\label{eq:Omega}
\alpha(\Omega) \in (3\pi/2,2\pi],\quad 
R_0\nsubseteq \Omega, \quad 
\Omega \cap Q_{I},\,\,\Omega \cap Q_{IV}\not=\emptyset
\quad \text{and}\quad  
Q_{II},Q_{III}\subset \Omega.
\end{equation}
In addition, we suppose that there exists $\kappa\in C^{\infty}(\Omega)$ such that the problem \eqref{eq:NS_0} admits some smooth radial self-similar solution in $\Omega:$
\begin{equation}\label{eq:s-sol}
u=\frac{x}{x^2+y^2} \kappa \quad \text{and} \quad  v=\frac{y}{x^2+y^2} \kappa.
\end{equation}
Furthermore, we can prove such $\kappa$ in \eqref{eq:s-sol} is also real analytic in $\Omega$, i.e., $\kappa\in C^{\omega}(\Omega)$. In fact, assume that $(x_0,y_0)$ is any given point in $\Omega$ and $\Omega'$ is some (small enough) cone neighborhood of $(x_0,y_0)$ with its vertex at the origin. 
By the discussion in Section \ref{sec:symm}, we have 
\begin{equation*}
    \kappa (x,y)= f_j \big( \arctan \, (y/x) \big)
    \,\,\, \text{or} \,\,\,f_j \big( \arctan \, (x/y) \big)
\end{equation*}
for some $j\in \{0,1,2,\dots,7\}$ and for any $(x,y)\in \Omega'$. 
Thus, $\kappa \in C^{\omega}(\Omega')$ due to the formula of $f_j$ in \eqref{eq:fs}.
\medskip

Inspired by the calculations in Section \ref{sec:symm}, it is reasonable to assume that the restriction of $\kappa$ to the first quadrant coincides with some $\kappa_{j}$ $(j=0,1,\dots,7)$ given in \eqref{eq:kappas}. Certainly, we also have to choose suitable parameters such that  $\kappa_{j} \in C^{\infty}(\Omega\cap Q_{I})$. Moreover, we can prove the following result.
\begin{prop}\label{prop:ext}
Assume that the domain $\Omega$ is a cone satisfying \eqref{eq:Omega}.
Let $f_j$ be defined in \eqref{eq:fs} for any $j\in \{1,2,\dots,7\}$, and 
let $\kappa \in C^{\infty}(\Omega)$ be given as in \eqref{eq:s-sol}.
Let us introduce the auxiliary function 
\begin{equation}\label{wt theta}
    \wt \theta (x,y)=\begin{cases}
     \arctan\,(y/x)
    & \text{if}\,\, x>0,y\geq 0,\\
       \pi/2-\arctan\, (x/y)
    & \text{if}\,\, x\leq 0,y> 0,\\
 \pi+\arctan\,(y/x)
    & \text{if}\,\, x<0,y\leq 0,\\
 3\pi/2-\arctan\,(x/y)
  & \text{if}\,\, x\geq 0,y< 0,
    \end{cases}
\end{equation}
where the range of $\arctan\, z$ belongs to $[-\pi/2,\pi/2]$. If the restriction of $\kappa$ to $\Omega\cap Q_{I}$ coincides with some $\kappa_j \in C^{\infty}(\Omega\cap Q_{I})$ in \eqref{eq:kappas}, and if we denote $\wt{\kappa}_j := f_j \circ \wt{\theta}$, then $\kappa = \wt{\kappa}_j$ in $\Omega$.
\end{prop}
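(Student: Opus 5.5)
The plan is to pass to the polar angle and reduce the statement to uniqueness for a second-order ODE in $\theta$. Since $\kappa\in C^\infty(\Omega)$ gives a radial self-similar solution, in every small subcone $\Omega'\subset\Omega$ the analysis of Section~\ref{sec:symm} applies. Set $F(\wt\theta):=\kappa$ on $\Omega$, where $\wt\theta$ is the continuous angle function \eqref{wt theta}; this is well defined because $\Omega$ is simply connected and does not contain $R_0$, so $\wt\theta$ is a smooth coordinate on $\Omega$ with values in an open interval $J$ of length $\alpha(\Omega)>3\pi/2$. The first goal is to show that $F$ satisfies one autonomous ODE on all of $J$. On a subcone avoiding the $y$-axis we have $\kappa=h(y/x)$ with $h$ solving \eqref{h_1}. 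With $z=\tan\wt\theta$ one has $(z^2+1)\,d/dz=d/d\wt\theta$ and $(z^2+1)^2h_{zz}+2z(z^2+1)h_z=F''$. Hence \eqref{h_1} becomes
\begin{equation*}
F''+F^2+4F+2C_1=0 .
\end{equation*}
On subcones avoiding the $x$-axis, \eqref{h_1 v} gives the same equation in the variable $\pi/2-\wt\theta$ (up to additive constants), with a constant $\wt C_1$. Since $d/d\wt\theta=-d/d(\pi/2-\wt\theta)$ and the equation is invariant under $\theta\mapsto-\theta$, we again obtain $F''+F^2+4F+2\wt C_1=0$.

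Next I would show that the constant is global. On each small subcone $F''+F^2+4F$ equals a constant. The function $G:=F''+F^2+4F$ is smooth on $J$ and locally constant, and $J$ is connected, so $G\equiv -2C_1$ on $J$ with the $C_1$ determined in $\Omega\cap Q_I$. The same argument applied to the first integral gives a global $C_2$ as well, although only $C_1$ is really needed.

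The conclusion then follows from ODE uniqueness. In $\Omega\cap Q_I$ we have $\wt\theta=\arctan(y/x)$, so $F=f_j$ on the corresponding subinterval of $J$. Moreover $f_j$ solves the same ODE $f''+f^2+4f+2C_1=0$ on its maximal interval of analyticity, by the construction in Section~\ref{sec:symm}, with the same parameters $(C_1,C_2,C)$. Both $F$ and $f_j$ solve the same smooth autonomous ODE and agree on an open interval. Picard--Lindelöf uniqueness, applied along the connected interval $J$, then shows that $F=f_j$ wherever both are defined.

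The main obstacle is that $f_j$ is given by an explicit formula, and one must check that this formula, read as a function of $\theta\in J$ rather than $\arctan(y/x)$ restricted to $(-\pi/2,\pi/2)$, is exactly the maximal solution. One must also check that it does not blow up inside $J$. I would handle this by a continuation argument. Let $J_0\subset J$ be the maximal interval containing the $Q_I$-part on which $F=f_j$. If an endpoint $\theta_*$ of $J_0$ lies inside $J$, then $F$ is smooth and bounded near $\theta_*$, so $(f_j,f_j')$ stays bounded as $\theta\to\theta_*$. Since the formulas in \eqref{eq:fs} are meromorphic in $\theta$ (am, $\tan$, $\coth$, rational), they extend analytically past $\theta_*$, and uniqueness pushes $J_0$ further, a contradiction. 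Remark~\ref{rmk:ssp} shows that a genuine pole of $f_j$ inside $J$ would contradict the smoothness of $\kappa$. Therefore $J_0=J$ and $\kappa=f_j\circ\wt\theta=\wt\kappa_j$ in $\Omega$.
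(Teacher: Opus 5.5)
Your proof is correct, but its uniqueness mechanism differs from the paper's.

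\textbf{The paper's route.} It first observes that $\kappa$ is real analytic on $\Omega$: by the local classification of Section~\ref{sec:symm}, near every ray $\kappa$ equals some $f_j(\arctan(y/x))$ or $f_j(\arctan(x/y))$. It then crosses $R_{\pi/2}$ using boundedness of $\kappa$ there. Finally it propagates $\kappa=\wt\kappa_j$ by unique continuation of real-analytic functions over a maximal subcone. The boundary ray of that subcone could only carry a strong singularity of $\wt\kappa_j$, which $\kappa\in C^\infty(\Omega)$ forbids.

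\textbf{Your route.} You pass to the angular profile $F$ on the whole interval $J$ and show that it satisfies one autonomous equation $F''+F^2+4F+2C_1=0$ there. Your local-constancy argument for $F''+F^2+4F$ is exactly what makes the constant global, including across the charts near the $y$-axis. You then replace the identity theorem by Picard--Lindel\"of. Your endpoint step is the same as the paper's maximal-cone step, but you make explicit why an endpoint of the agreement interval must be a pole of $f_j$.

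\textbf{What each buys.} Your route does not depend on the exhaustiveness of the local classification: you never need real analyticity of $\kappa$ a priori, only the reduced ODE. It also handles the integration constant transparently. The paper's route is shorter and stays with the two-variable functions $\kappa$ and $\wt\kappa_j$, at the price of relying on Theorem~\ref{thm:main-1} for analyticity.

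Two small points to tidy.
\begin{itemize}
\item $\mathrm{am}$ itself is not meromorphic. What you actually use is that each $f_j$ is real analytic on $\R$ except at a discrete set where $|f_j|\to\infty$. This holds because $\mathrm{am}(\cdot,k)$ is real analytic and increasing for $0<k<1$, and the outer functions only create poles.
\item That $f_j$ satisfies the ODE with the same $C_1$ is best derived from the hypothesis $f_j=F$ on $J\cap(0,\pi/2)$ together with the identity principle, rather than ``by construction''. The displayed $f_5$ in \eqref{eq:fs} has constant term $-2-\sqrt{2(2-C_1)}$, whereas the derivation in Case~3 gives $-2-2\sqrt{2(2-C_1)}$, and only the latter solves the ODE.
\end{itemize}
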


\begin{proof}
By the assumption $\kappa=\wt\kappa_j$ in $\Omega\cap Q_I$, we can regard $\wt\kappa_j$ as a function in the class $C^0(\overline{\Omega\cap  Q_I})$. In particular, we have
\begin{equation*}
\Big| \lim_{\theta \to (\pi/2)^-} f_j(\theta)  \Big| <\infty  .  
\end{equation*}
On the other hand, using the definition of $f_j$, i.e., \eqref{eq:fs}, we know that $f_j$ is smooth near the point $\theta=\pi/2$. Therefore, there exists a small cone $\Omega'$ containing the ray $R_{\pi/2}$ such that 
$\wt \kappa_j \in C^{\infty}(\Omega' \cup Q_I)$. 
Furthermore, $\wt \kappa_j \in C^{\omega}(\Omega' \cup (\Omega\cap  Q_I))$ by the definitions of $f_j$ and $\wt \theta$. As both $\kappa$ and $\wt \kappa_j$ can be regarded as the real analytic extension of $\kappa_j$ into 
$\Omega' \cup (\Omega\cap  Q_I)$, we use the property of unique continuation of real analytic functions 
(see, e.g., \cite[Section 3 (b) of Chapter 3]{John1982}), to conclude 
\begin{equation}\label{eq:wt_kappa_1}
    \kappa = \wt \kappa_j \quad \text{in} \,\,\, \Omega' \cup (\Omega\cap  Q_I).
\end{equation}

Next, we show that $\kappa$ and $\wt \kappa_j$ coincide in the region $\Omega_+:= (\Omega\cap  Q_I)\cup R_{\pi/2} \cup Q_{II}$, that is,
\begin{equation}\label{eq:wt_kappa_2}
    \kappa = \wt \kappa_j \quad \text{in} \,\,\, \Omega_+.
\end{equation}
Suppose that \eqref{eq:wt_kappa_2} does not hold and $\Omega'$ is the maximal (open) cone region in $\Omega_+$ satisfying \eqref{eq:wt_kappa_1}. Then there exists $(x_0,y_0) \in \pd \Omega' \cap Q_{II}$ such that $\wt\kappa_j$ admits the strong singularity at $(x_0,y_0)$. This contradicts \eqref{eq:wt_kappa_1} and the choice of $\kappa$ in the class $C^{\infty}(\Omega)$. 
Consequently, $\wt \kappa_j$ has no singular point in $\Omega_+$ and \eqref{eq:wt_kappa_2} holds true.

In a similar manner, we can prove that $\kappa$ and $\wt \kappa_j$ have the same values as well whenever $(x,y)$ belongs to $R_{\pi}$, $Q_{III}$, $R_{3\pi/2}$ and $\Omega\cap Q_{IV}$. This completes the proof of $\kappa = \wt \kappa_j$ in $\Omega.$
\end{proof}

\begin{remark}
A straightforward computation shows that the auxiliary function $\widetilde{\theta}$ in \eqref{wt theta} is continuous on $\Rs\setminus R_0$, and that all its partial derivatives are smooth throughout $\Rs$. Indeed, we have
\begin{equation}\label{partial theta smooth}
\begin{aligned}
   \pd_{x,y}^{\alpha} \big( \wt\theta(x,y) \big) 
   = \pd_{x,y}^{\alpha}\big( \arctan \,(y/x) \big)
   =  \frac{P_{|\alpha|}(x,y)}{(x^2+y^2)^{|\alpha|}}\in C^{\infty} (\Rs)
\end{aligned}
\end{equation}
for any multi-index $\alpha\in \BN_0^2$ with $|\alpha| \geq 1$ and for some polynomial $P_{|\alpha|}$ of order $|\alpha|$.
In addition, $P_{|\alpha|}$ is a linear combination of homogeneous terms $x^{\beta_1}y^{\beta_2}$ with $\beta_1+\beta_2=|\alpha|$.
\end{remark}

At last, the following corollary directly from Proposition \ref{prop:ext}.
\begin{cor}\label{cor:ext_Rs}
Let $f_j$ be defined in \eqref{eq:fs} for $j\in \{1,2,\dots,7\}$, and let $\wt \theta (x,y)$ be defined in \eqref{wt theta}. Suppose that $\wt \kappa=f_j \circ \wt{\theta}\in C^{\infty}(\Rs\backslash R_0)$ for some $j$ has been constructed as in Proposition \ref{prop:ext} whenever the cone $\Omega=\Rs\backslash R_0$. If there exists some $\wt\kappa\in C^{\infty}(\Rs)$ such that
\begin{equation*}
 \wt \kappa|_{\Rs\backslash R_0}=\wt \kappa_j,   
\end{equation*}
then the $n$-th order derivative $f_{j}^{(n)}$ of $f_j$ with respect to $\theta$ necessarily satisfies
\begin{equation}\label{cdt:f_j}
   f^{(n)}_j(0)=f_{j}^{(n)}(2\pi)
\end{equation}
for any nonnegative integer $n$.
\end{cor}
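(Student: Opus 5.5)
The plan is to compare the derivatives of $\wt\kappa$ along the positive $x$-axis computed from the two sides of $R_0$. By \eqref{wt theta}, $\wt\theta\to 0$ as we approach $R_0$ from $Q_I$ (from above), and $\wt\theta\to 2\pi$ as we approach $R_0$ from $Q_{IV}$ (from below). Since $\wt\kappa\in C^\infty(\Rs)$ and agrees with $f_j\circ\wt\theta$ off $R_0$, every partial derivative of $\wt\kappa$ is continuous across $R_0$. Its one-sided limits at $(x_0,0)$, $x_0>0$, must therefore coincide.

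For the zeroth order, take the limits $y\to0^\pm$ of $f_j(\wt\theta(x_0,y))$. These are $f_j(0)$ and $\lim_{\theta\to 2\pi^-}f_j(\theta)$, and continuity gives $f_j(0)=f_j(2\pi)$. The value $f_j(2\pi)$ is understood as this limit, and it is finite because $\wt\kappa_j$ is smooth up to $R_0$ from below. There $f_j$ is a real-analytic expression in $\theta$, so the limit equals the value.

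For higher orders, I would avoid the chain rule in full generality and use the angular derivative instead. The operator $\pd_\theta := -y\pd_x + x\pd_y$ is a smooth vector field on $\Rs$, so $\pd_\theta^n\wt\kappa\in C^\infty(\Rs)$ for every $n$. Off $R_0$, $\wt\theta$ differs from the polar angle only by locally constant shifts; this follows from \eqref{partial theta smooth}, which says all first derivatives of $\wt\theta$ equal those of $\arctan(y/x)$. Hence $\pd_\theta\wt\theta = -y\cdot\frac{-y}{x^2+y^2}+x\cdot\frac{x}{x^2+y^2}=1$ on $\Rs\setminus R_0$. Induction then gives $\pd_\theta^n(f_j\circ\wt\theta)=f_j^{(n)}\circ\wt\theta$ there.

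Taking the limits $y\to 0^\pm$ at $(x_0,0)$ and using continuity of $\pd_\theta^n\wt\kappa$ yields $f_j^{(n)}(0)=f_j^{(n)}(2\pi)$, which is \eqref{cdt:f_j}.

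The only delicate point is justifying that $f_j^{(n)}$ extends continuously to the endpoints $0$ and $2\pi$ of $(0,2\pi)$. This holds because $\pd_\theta^n\wt\kappa$ is continuous on all of $\Rs$, so its restriction to either side has finite limits. Those limits are exactly $\lim f_j^{(n)}(\theta)$ as $\theta\to0^+$ and as $\theta\to2\pi^-$. Analyticity of the explicit $f_j$ in \eqref{eq:fs} then identifies these limits with the values $f_j^{(n)}(0)$ and $f_j^{(n)}(2\pi)$, since a strong singularity is excluded by Remark~\ref{rmk:ssp}.
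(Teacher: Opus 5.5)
Your argument is correct. It is essentially the reasoning the paper leaves implicit when it says the corollary follows directly from Proposition~\ref{prop:ext}: smoothness of $\wt\kappa$ forces the one-sided limits of all its derivatives to agree at $R_0$, where $\wt\theta$ jumps from $2\pi$ to $0$ while its derivatives stay smooth by \eqref{partial theta smooth}. Your use of the smooth vector field $-y\pd_x+x\pd_y$, which satisfies $(-y\pd_x+x\pd_y)\wt\theta\equiv 1$, isolates $f_j^{(n)}\circ\wt\theta$ directly and so avoids the Fa\`a di Bruno (Bell formula) bookkeeping the paper uses in Appendix~\ref{prf lem:cos} for the converse direction.
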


\subsection{From the local profiles to radial Jeffery--Hamel solutions}
In this subsection, we will discuss the relationship between the local profiles obtained in $\Rs\backslash R_0$ and the radial Jeffery--Hamel solutions in Definition \ref{def:JH}.  To this end, we suppose that
\begin{equation}\label{eq:s-sol-2}
u=\frac{x}{x^2+y^2} \kappa \quad \text{and} \quad  v=\frac{y}{x^2+y^2} \kappa.
\end{equation}
is a smooth radial self-similar solution of \eqref{eq:NS_0} for some $\kappa \in C^{\infty}(\Rs).$ 
More precisely, we shall see that such $\kappa$ in \eqref{eq:s-sol-2} can only be regarded as the extension from $\kappa_0$ or $\kappa_3,$ which is not very surprising based on the results in \cite{sverak2011landau,guillod2015generalized}. Obviously, if we take $\kappa=C$ in \eqref{eq:s-sol-2} for some constant $C,$ then \eqref{eq:s-sol-2} provides us with a smooth radial self-similar solution of \eqref{eq:NS_0}. Thus, the main task of this subsection is to study $\kappa_j$ for $1\leq j\leq 7.$ The key point in the following analysis is to use the necessary conditions in \eqref{cdt:f_j} and the distribution of singularities of $\kappa_j$ in \eqref{eq:kappas}.
\smallbreak 

In view of Proposition \ref{prop:ext}, we only need to focus on the restriction of $\kappa$ in the first quadrant $Q_I$ as $\eqref{eq:s-sol-2}$ can be regarded as a local solution in $Q_I.$

\begin{lem}
Let $\kappa \in C^{\infty}(\Rs)$ be given as in \eqref{eq:s-sol-2}.  
Then the restriction of $\kappa$ to $Q_{I}$ cannot coincide with $\kappa_2$, $\kappa_6$ or $\kappa_7$
in \eqref{eq:kappas}.
\end{lem}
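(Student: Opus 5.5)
We argue by contradiction, assuming that $\kappa|_{Q_I}=\kappa_j$ for some $j\in\{2,6,7\}$. By Proposition \ref{prop:ext}, applied with $\Omega=\Rs\backslash R_0$, $\kappa$ must then equal $\wt\kappa_j=f_j\circ\wt\theta$ on $\Rs\backslash R_0$. Since $\kappa\in C^\infty(\Rs)$, two things follow. First, $f_j$ must be finite and smooth on the whole interval $[0,2\pi]$, because a blow-up at some $\theta_0\in(0,2\pi)$ would be a strong singularity in the sense of Remark \ref{rmk:ssp}. Second, Corollary \ref{cor:ext_Rs} gives the periodicity conditions $f_j^{(n)}(0)=f_j^{(n)}(2\pi)$ for all $n\ge 0$. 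The key observation is that $f_2,f_6,f_7$ are each strictly monotone on any interval of regularity that contains neither the zero of their argument nor a pole. We use this to contradict $f_j(0)=f_j(2\pi)$, with a short case analysis for each $j$.

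\emph{Case $j=2$.} Here $f_2(\theta)=-2-6/(\theta+C)^2$. If $-C\in[0,2\pi]$, then $f_2$ blows up at an angle in the closed sector, which rules out smoothness on $[0,2\pi]$ (if $-C\in\{0,2\pi\}$, the blow-up sits on $R_0$ and contradicts $\kappa\in C^\infty(\Rs)$ directly). Otherwise $\theta+C$ keeps a fixed sign on $[0,2\pi]$, so $(\theta+C)^2$ is strictly monotone there and $f_2(0)\neq f_2(2\pi)$. This contradicts \eqref{cdt:f_j} with $n=0$.

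\emph{Cases $j=6,7$.} Write $s(\theta)=\frac{\sqrt[4]{2-C_1}}{2}(\sqrt[4]{8}\theta+C)$. This is affine with nonzero slope, since on $\Gamma_-$ we have $C_1<2$. For $j=7$, $\coth^2 s$ blows up where $s=0$, so this point must lie outside $[0,2\pi]$. Then $s$ has a fixed sign on the interval, $\coth^2 s$ is strictly monotone in $|s|$, and $|s|$ is strictly monotone on $[0,2\pi]$. Hence $f_7(0)\neq f_7(2\pi)$.

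For $j=6$, $f_6$ is smooth everywhere, and $\tanh^2 s$ is even and strictly monotone in $|s|$. So $f_6(0)=f_6(2\pi)$ forces $s(0)=-s(2\pi)$. In that symmetric configuration we pass to the first derivative. We have $f_6'(\theta)=-3\sqrt{2(2-C_1)}\,s'\cdot 2\tanh s\,\mathrm{sech}^2 s$, which is odd in $s$. It follows that $f_6'(0)=-f_6'(2\pi)$, and this quantity is nonzero because $s(0)\neq0$: if $s(0)=0$, then also $s(2\pi)=0$, which is impossible since $s$ is strictly monotone. This contradicts \eqref{cdt:f_j} with $n=1$.

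The main obstacle is bookkeeping rather than analysis. One has to make sure that the smoothness of $\kappa$ across $R_0$ is correctly translated into the conditions \eqref{cdt:f_j}, including the degenerate placements of the pole exactly on $R_0$, and that the symmetric case for $f_6$ is handled by the derivative condition rather than the value condition.
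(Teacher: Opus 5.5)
Your proof is correct, and for $\kappa_6$ it matches the paper's argument. The paper also uses $f_6(0)=f_6(2\pi)$ to force the symmetric placement $s(0)=-s(2\pi)$, and then uses the oddness of $f_6'$ in $s$ to get $f_6'(0)=-f_6'(2\pi)\neq 0$.

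For $\kappa_2$ and $\kappa_7$ you take a genuinely different route.

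The paper treats all three cases uniformly through endpoint data alone. It first uses $f_j(0)=f_j(2\pi)$ to pin down the symmetric configuration: $C=-\pi$ for $f_2$, and, by analogy with $f_6$, $C=-\sqrt[4]{8}\pi$ for $f_7$. It then gets the contradiction from the $n=1$ condition, since $f_j'(0)=-f_j'(2\pi)\neq 0$.

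You instead apply the strong-singularity mechanism of Remark \ref{rmk:ssp}, in the same way the paper itself argues for $\kappa_1$ in the next lemma. This excludes every placement of the pole in $[0,2\pi]$, and you correctly handle the endpoint placements $-C\in\{0,2\pi\}$ by continuity of $\kappa$ at $R_0$. In every remaining case the argument has a fixed sign on $[0,2\pi]$, so monotonicity already gives $f_j(0)\neq f_j(2\pi)$.

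What each route buys:
\begin{itemize}
\item The paper's route needs no bookkeeping of interior poles. It also gives one computation pattern for $f_2$, $f_6$ and $f_7$.
\item Your route uses only the $n=0$ condition. It also makes explicit that the symmetric configuration the paper lands on for $f_2$ and $f_7$ already has a pole on $R_\pi$. That configuration is therefore excluded by the smoothness of $\kappa$ on $\Rs\backslash R_0$ from Proposition \ref{prop:ext}, before any derivative is taken. In the paper, the derivative step in those two cases is logically redundant, though valid.
\end{itemize}
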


\begin{proof}
Suppose that $\kappa|_{Q_{I}}=\kappa_2 \in C^{\infty}(Q_I)$. 
According to Corollary \ref{cor:ext_Rs}, we have $f_2^{(n)}(0) = f_2^{(n)}(2\pi)$ for $n=0,1$.
By the condition $f_2(0) = f_2(2\pi)$, there holds 
\[\frac{1}{C^2} = \frac{1}{(2\pi + C)^2} .\]
Then we have to choose $C = -\pi$ in the formula of $f_2(\theta)$ (see \eqref{eq:fs}). 
However, such a choice of $C$ leads to  
$$ f_2'(\theta) = \frac{12}{(\theta -\pi)^3},$$
from which we easily see $f_2'(0) \neq f_2'(2\pi)$, which is a contradiction and hence $\kappa|_{Q_{I}} \neq  \kappa_2$.  

As the discussions on the cases $\kappa_6$ and $\kappa_7$ are similar, we only give the details concerning $\kappa_6$. Assuming $\kappa|_{Q_{I}}=\kappa_6 \in C^{\infty}(Q_I)$, 
Corollary \ref{cor:ext_Rs} provides us with  $f_6^{(n)}(0) = f_6^{(n)}(2\pi)$ for $n=0,1$.
For simplicity, let us denote 
\begin{equation*}
f_6(\theta) = -2 + 2B_6 - 3B_6 \tanh^2 \big( g_6(\theta) \big),     
\end{equation*}
where we have defined that\footnote{Note that $C_1<2$ whenever $(C_1,C_2)\in \Gamma_-$.}
$$ B_6= \sqrt{2(2 - C_1)}>0, \quad g_6(\theta) = A_6 ( \sqrt[4]{8} \theta + C)  \quad \text{and}\quad 
A_6 = \frac{(2 - C_1)^{1/4}}{2}>0. $$
Now, the condition $f_6(0) = f_6(2\pi)$ is reduced to the equality 
\begin{equation*}
 \tanh^2\big( g_6(0) \big) = \tanh^2\big( g_6(2\pi) \big),  
\end{equation*}
which yields $ |g_6(0)| = |g_6(2\pi)|$. Thus, we have  
\begin{equation}\label{eq:CA_6}
    C = -\sqrt[4]{8} \pi, \quad 
    g_6 (0)=-\sqrt[4]{8} \pi A_6  \quad \text{and}\quad 
g_6(2\pi) =  \sqrt[4]{8} \pi A_6.
\end{equation}
On the other hand, inserting \eqref{eq:CA_6} into the following formula 
$$ f_6'(\theta) = -6\sqrt[4]{8} A_6 B_6 \cdot 
\tanh \big( g_6(\theta)\big) \cdot \text{sech}^2\big( g_6(\theta)\big),$$
we have
$$f_6'(0) = -f_6'(2\pi)=6\sqrt[4]{8} A_6 B_6 \cdot  
\tanh\big(  \sqrt[4]{8} \pi A_6 \big) \cdot 
\text{sech}^2 \big(  \sqrt[4]{8} \pi A_6 \big) \neq 0,$$
which contradicts the requirement $f_6'(0) = f_6'(2\pi)$.
Hence $\kappa|_{Q_{I}} \neq  \kappa_6$.
\end{proof}

Next, we will prove that $\kappa$ cannot coincide with $\kappa_1$, $\kappa_4$ or $\kappa_5$ in $Q_{I}$ due to the distribution of singular points.

\begin{lem}
Let $\kappa \in C^{\infty}(\Rs)$ be given as in \eqref{eq:s-sol-2}.  
Then the restriction of $\kappa$ to $Q_{I}$ cannot coincide with $\kappa_1$, $\kappa_4$ or $\kappa_5$ in \eqref{eq:kappas}.
\end{lem}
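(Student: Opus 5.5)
The plan is to argue by contradiction, using the distribution of singular points together with Remark~\ref{rmk:ssp}. Suppose $\kappa|_{Q_I}=\kappa_j$ for some $j\in\{1,4,5\}$. By Proposition~\ref{prop:ext} (applied to $\Omega=\Rs\backslash R_0$), $\kappa=\wt\kappa_j=f_j\circ\wt\theta$ on $\Rs\backslash R_0$. Since $\kappa\in C^\infty(\Rs)$, the function $f_j$ must then be finite and smooth on the whole interval $[0,2\pi]$; indeed on all of $(0,2\pi)$, as $\wt\theta$ sweeps this range. The key observation is that each of $f_1,f_4,f_5$ has the form (constant) $-$ (positive constant)$\cdot G(\ell(\theta))^2$. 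Here $\ell$ is affine in $\theta$, and $G$ is $\cot(\tfrac12\mathrm{am}(\cdot))$, $\tan(\mathrm{am}(\cdot))$ or $\tan(\cdot)$, respectively. In each case $G$ blows up periodically in its argument, so I will exhibit a pole of $f_j$ inside any interval of length $2\pi$. That pole contradicts smoothness via Remark~\ref{rmk:ssp}.

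\textbf{Case $f_5$.} The argument is $A\theta+C$ with $A=\sqrt[4]{(2-C_1)/2}>0$, and $\tan^2$ has poles spaced by $\pi$. Hence $f_5$ has poles at $\theta=(\pi/2+k\pi-C)/A$, which are spaced by $\pi/A$. There is a pole in $[0,2\pi)$ unless $\pi/A>2\pi$, i.e.\ $A<1/2$. This case requires a more careful count, and I would handle it with the periodicity condition \eqref{cdt:f_j} of Corollary~\ref{cor:ext_Rs}. Suppose $f_5$ is pole-free on $[0,2\pi]$ and $f_5^{(n)}(0)=f_5^{(n)}(2\pi)$ for all $n$. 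Since $f_5$ is real-analytic on the whole interval, $f_5(\theta)=f_5(\theta+2\pi)$ near $0$, so $f_5$ extends to a $2\pi$-periodic function. Its only possible periods are multiples of $\pi/A$, so $2\pi A/\pi=2A\in\mathbb{N}$, forcing $A\ge 1/2$. A period of length $\pi/A\le 2\pi$ then always contains a pole. This gives the contradiction.

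\textbf{Cases $f_1$ and $f_4$.} I would use the same mechanism with the Jacobi amplitude. Since $\mathrm{am}(m,k)$ is strictly increasing in $m$ with $\mathrm{am}(m+2K(k),k)=\mathrm{am}(m,k)+\pi$, the quantity $\tan(\mathrm{am}(\cdot))$ blows up whenever $\mathrm{am}=\pi/2+k\pi$. So $f_4$ has poles spaced by $2K(k)$ in the variable $m$, i.e.\ spaced by $2K\sqrt{6/(c-a)}$ in $\theta$. Similarly, $\cot(\tfrac12\mathrm{am})$ blows up when $\mathrm{am}\in 2\pi\mathbb{Z}$, so $f_1$ has poles spaced by $4K(k)\cdot 3/\sqrt{6\beta}$ in $\theta$. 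The functions $f_1,f_4$ are then periodic and meromorphic, with exactly one pole per minimal period of $f_j$. (The minimal period is $2K$ for $\tan^2\mathrm{am}$ and $4K$ for $\cot^2(\tfrac12\mathrm{am})$; I would verify that no smaller period exists by checking that one pole occurs per these intervals.) The argument above then applies verbatim. Analyticity on $[0,2\pi]$ plus \eqref{cdt:f_j} forces $2\pi$ to be an integer multiple of the minimal period. Every minimal period contains a pole, so $f_j$ has a pole in $[0,2\pi)$. By Remark~\ref{rmk:ssp}, $\kappa$ then has a strong singularity on some ray, a contradiction.

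The main obstacle is the step ``analytic on $[0,2\pi]$ with all derivatives matching $\Rightarrow$ $2\pi$-periodic $\Rightarrow$ $2\pi$ is a multiple of the minimal period.'' This requires that $f_j$ satisfies a second-order ODE (namely \eqref{h_1} in the $\theta$ variable), so that matching of $f_j,f_j'$ at $0$ and $2\pi$ already gives $f_j(\theta+2\pi)=f_j(\theta)$ by uniqueness. It also requires identifying the minimal period correctly, i.e.\ checking that exactly one pole occurs per minimal period rather than none.
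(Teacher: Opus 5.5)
Your argument is correct, but it reaches the contradiction by a different mechanism than the paper.

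\textbf{The paper's route.} It works locally at the endpoints and uses only the $n=0,1$ cases of Corollary~\ref{cor:ext_Rs}. It writes $f_1=\alpha-\beta\cot^2 g_1$ with the strictly increasing phase $g_1=\tfrac12\am(A_1\theta+\sqrt\beta C,k)$. The condition $f_1(0)=f_1(2\pi)$ then gives $\cot g_1(0)=\pm\cot g_1(2\pi)$.
\begin{itemize}
\item The sign $-$ (with nonzero value) is excluded. Because $\operatorname{dn}>0$, the sign of $f_1'$ is that of $\cot g_1$, so $f_1'$ would have opposite signs at $0$ and $2\pi$.
\item The sign $+$ forces $g_1(2\pi)-g_1(0)\ge\pi$. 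Hence $\cot g_1$ blows up inside $(0,2\pi)$, contradicting Remark~\ref{rmk:ssp}.
\end{itemize}
The cases $f_4,f_5$ are declared analogous.

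\textbf{Your route and the trade-off.} You promote the endpoint matching to global $2\pi$-periodicity, and then use only the fact that $f_j$ has some real pole. The paper's argument is elementary (monotonicity of $\am$, positivity of $\operatorname{dn}$) but formula-specific. Yours is uniform across $j$. It would equally dispose of $f_2$ and $f_7$ (which have real poles) and of $f_6$ (nonconstant with limits at $\pm\infty$, hence not periodic), so it also subsumes the preceding lemma.

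\textbf{Two points to tighten.}

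First, the passage from ``$f_j(\theta+2\pi)=f_j(\theta)$ near $0$'' to ``everywhere'' cannot use the real-analytic identity theorem alone. The set $\mathbb{R}$ minus the poles is disconnected. You need either the meromorphic continuation of $f_j$ to $\mathbb{C}$, or, more cleanly, the ODE argument:
\begin{itemize}
\item $f_j$ solves the autonomous equation $f''+f^2+4f+2C_1=0$, which is \eqref{h_1} written in $\theta=\arctan z$.
\item Let $J\supset[0,2\pi]$ be the maximal existence interval of the solution with data $(f_j(0),f_j'(0))$.
\item The function $\theta\mapsto f_j(\theta+2\pi)$ is a solution with the same data at $0$, so $J-2\pi=J$, i.e.\ $J=\mathbb{R}$.
\item This contradicts directly the existence of a real pole of $f_j$.
\end{itemize}

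Second, this makes your minimal-period bookkeeping ($2A\in\mathbb{N}$, ``one pole per minimal period'') unnecessary. Even once periodicity is in hand, a single real pole $\theta^*$ already has a translate $\theta^*+2\pi m$ lying in $[0,2\pi)$.
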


\begin{proof}
Suppose that $\kappa|_{Q_{I}}=\kappa_1 \in C^{\infty}(Q_I)$. Let us denote 
    \begin{equation*}
        f_1(\theta)  = \alpha - \beta \cot^2 \left( g_1(\theta) \right)
    \end{equation*}
with 
\begin{equation*}
g_1(\theta):= \dfrac{1}{2} \operatorname{am}\, (A_1 
\theta + \sqrt{\beta} \, C,  k\, ),\quad 
A_1=\dfrac{\sqrt{6\beta}}{3} >0 
\quad \text{and}\quad  
k=\sqrt{\dfrac{\beta - m + \alpha}{2\beta}}.  
\end{equation*}
By Corollary \ref{cor:ext_Rs}, we have $\kappa(x,y) = \big(f_1 \circ \wt \theta\,\big) (x,y)$ in $\Rs$ with $\wt \theta$ defined by \eqref{wt theta}.
Moreover, we have $f_1^{(n)}(0) = f_1^{(n)}(2\pi)$ for $n=0,1$ in this situation.
From the expression of $ f_1 $, the condition $ f_1(0) = f_1(2\pi) $ is admissible only if
\begin{equation*}
\cot \big( g_1(0) \big) = \pm \cot \big( g_1(2\pi) \big).
\end{equation*}
\begin{itemize}
    \item Suppose that $\cot \big( g_1(0) \big) = -\cot \big( g_1(2\pi) \big) \neq 0$.
The condition $f_1'(0) = f_1'(2\pi)$ leads to the following equality:
\begin{equation}\label{eq:k_1_1}
-\frac{\operatorname{dn} \left( \sqrt{\beta}\,C, k \right) }{\sin^2 \left( g_1(0) \right)} = \frac{\operatorname{dn} \left( 2\pi A_1 + \sqrt{\beta}\,C, k \right) }{\sin^2 \left( g_1(2\pi)\right)},    
\end{equation}
where we use the notation 
\begin{equation*}
\operatorname{dn}\,(\mu,k_0):=\frac{\pd \operatorname{am}\,(\mu,k_0)}{\pd \mu} 
\end{equation*}
for any fixed $k_0$.
However, \eqref{eq:k_1_1} can never be true as $\operatorname{dn} > 0$.

\item Suppose that $\cot \big( g_1(0) \big) = \cot \big( g_1(2\pi) \big)$. Note that
 $g_1(\theta)$ is continuous and strictly increasing. Hence we have $g_1(2\pi)-g_1(0)\geq \pi $ as the cotangent function has a period $\pi$. Therefore, there exists some $\theta_0 \in (0,2\pi)$ such that  
 \begin{equation*}
     \lim_{\theta \to \theta_0} \big|\cot\big(g_1(\theta)\big)\big|=\infty.
 \end{equation*}
By the relation above, $\kappa_1$ admits some strong singularity at the ray $R_{\theta_0}$.
Then we have $\kappa|_{Q_{I}} \neq \kappa_1$ due to Remark \ref{rmk:ssp}
\end{itemize}
The discussions for $\kappa|_{Q_I} = \kappa_j \in C^{\infty}(Q_I)$, $j = 4,5$, are analogous, and we omit the details here.

\end{proof}

The following proposition states that the choice of $\kappa_3$ provides us with a smooth self-similar solution of \eqref{eq:NS_0}.
\begin{prop}
\label{prop:exist}
Assume that $C\in \mathbb{R}$ and $(C_1,C_2)\in \text{II}$ as in \eqref{eq:domain}. Let $\wt \theta(x,y)$ be given by \eqref{wt theta}. For any $(x,y) \in \Rs$, let us define 
\begin{equation*}
\begin{aligned}
         \omega_c (x,y) & :=
          \cos \Big( 2\,\mathrm{am}\big(
                   \sqrt{\dfrac{c-a}{6}}   \wt \theta (x,y)+\sqrt{c-a}C , \sqrt{\dfrac{c-b}{c-a}} 
                   \, \big) \Big),\\
             \wt \kappa_3 (x,y) &:= c-(c-b) \frac{1-\omega_c(x,y)}{2},
\end{aligned}
\end{equation*}
where $a < b < c$ are real roots of the cubic equation $h^3 + 6h^2 + 6C_1 h + C_2 = 0$ satisfying the relation  
\begin{equation}\label{cdt:K}
\pi \sqrt{\dfrac{c-a}{6}}  = n K \Big( \sqrt{\frac{c - b}{c - a}} \,\Big)
\end{equation}
for some positive integer $n$. Here, $ K(k) =F(\pi/2,k)$ (see \eqref{eq:am} for the definition of the complete elliptic integral of the first kind).
Then the functions 
\begin{equation*}
\begin{aligned}
u(x,y) &= \frac{x}{x^2+y^2}\,\wt\kappa_3(x,y), \quad
v(x,y) = \frac{y}{x^2+y^2}\,\wt\kappa_3(x,y),\\
p(x,y) &= \frac{2}{x^2+y^2}\,\wt\kappa_3(x,y) + \frac{C_1}{x^2+y^2},
\end{aligned}
\end{equation*}
are smooth in $\Rs$ and satisfy the system \eqref{eq:NS_0}.
\end{prop}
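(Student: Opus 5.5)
The plan is to view $\wt\kappa_3$ as $G\circ\wt\theta$ for a single smooth function $G$ of one real variable. I will show that $G$ is $2\pi$-periodic, so that $\wt\kappa_3$ is smooth across the cut $R_0$. Then I will check that $G$ solves the autonomous ODE obtained in Section~\ref{sec:symm}, which by the reduction there is equivalent to \eqref{eq:NS_0} for radial fields.

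\textbf{Step 1 (rewrite).} Set $k:=\sqrt{(c-b)/(c-a)}\in(0,1)$ and
\begin{equation*}
G(\theta):=c-(c-b)\frac{1-\cos\big(2\,\mathrm{am}(\sqrt{(c-a)/6}\,\theta+\sqrt{c-a}\,C,k)\big)}{2}.
\end{equation*}
Since $(1-\cos 2\phi)/2=\sin^2\phi$, we have $G=f_3$ from \eqref{eq:fs}, and $\wt\kappa_3=G\circ\wt\theta$. Because $\mathrm{am}(\cdot,k)$ is real-analytic on $\R$, $G$ is real-analytic on $\R$; there are no poles, in contrast with $f_1,f_4,f_5$.

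\textbf{Step 2 (periodicity).} I will use the classical identity $\mathrm{am}(\mu+2K(k),k)=\mathrm{am}(\mu,k)+\pi$. It gives that $\mu\mapsto\cos(2\,\mathrm{am}(\mu,k))$ is $2K(k)$-periodic. By \eqref{cdt:K}, shifting $\theta$ by $2\pi$ shifts the argument $\mu$ by $2\pi\sqrt{(c-a)/6}=2nK(k)$. Hence $G(\theta+2\pi)=G(\theta)$, i.e.\ $G$ is $2\pi$-periodic.

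\textbf{Step 3 (smoothness in $\Rs$).} $\wt\theta$ is just the polar angle with values in $[0,2\pi)$, so $\wt\kappa_3(r\cos\theta,r\sin\theta)=G(\theta)$ for every $\theta$ by periodicity. Near any point of $\Rs$, including points of $R_0$, there is a local smooth branch $\vartheta$ of the polar angle. It differs from $\wt\theta$ by $0$ or $2\pi$ on each side of the cut, so $\wt\kappa_3=G\circ\vartheta$ locally. This is smooth by the chain rule, in the spirit of \eqref{partial theta smooth}. Consequently $u,v,p$ are $C^\infty$ (even real-analytic) in $\Rs$. This is the step I expect to need the most care, namely the periodicity identity for $\mathrm{am}$ combined with \eqref{cdt:K}. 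It is exactly where the Corollary~\ref{cor:ext_Rs} compatibility conditions $G^{(n)}(0)=G^{(n)}(2\pi)$ become automatic.

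\textbf{Step 4 (the equations).} With $z=\tan\theta$ one has $(z^2+1)\frac{d}{dz}=\frac{d}{d\theta}$. Hence $(z^2+1)^2h_{zz}+2z(z^2+1)h_z=h_{\theta\theta}$, and \eqref{h_1} becomes the autonomous equation
\begin{equation*}
G''+G^2+4G+2C_1=0.
\end{equation*}
$f_3$ solves it. Indeed, $f_3$ was obtained in Case~3 by inverting the quadrature \eqref{eq:I_C_1} of the first integral $G'^2=-\tfrac23\CP_3(G)$, and differentiating that identity gives the ODE wherever $G'\neq0$; it extends to all $\theta$ by analyticity. Alternatively, one can check directly using $\frac{d}{d\mu}\mathrm{am}=\mathrm{dn}$ and $\mathrm{dn}^2=1-k^2\sin^2\mathrm{am}$. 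Since the ODE is autonomous, $G(\vartheta+\text{const})$ solves it for every local branch. Locally the computations of Subsection~\ref{subsec:solve}, namely \eqref{eq:invs}, \eqref{eq:zub+a}, \eqref{eq:zua-b} and \eqref{eq:tilu} with $C_0=0$, $\wt{\bF}=0$, are reversible. The alternative is \eqref{eq:invs_2} near the $y$-axis, or directly the polar form $\bU=G(\theta)r^{-1}\be_r$, $p=(2G+C_1)r^{-2}$ as in \eqref{eq:Up}. So the triple $(u,v,p)$ satisfies \eqref{eq:NS_0} in a neighborhood of every point, hence in all of $\Rs$.
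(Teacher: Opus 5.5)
Your proposal is correct and follows essentially the paper's route: smoothness across $R_0$ comes from $\mathrm{am}(\mu+2K,k)=\mathrm{am}(\mu,k)+\pi$ together with \eqref{cdt:K}, and the equations come from the reversible reduction of Section~\ref{sec:symm} (or the polar form). Your packaging, namely the $2\pi$-periodicity of $G=f_3$ composed with a local smooth branch of the polar angle, is a cleaner replacement for the one-sided limits and Fa\`a di Bruno expansion in Appendix~\ref{prf lem:cos}. It also avoids that appendix's slip of asserting $g_0\in C^{\infty}(\Rs)$: in fact $g_0$ jumps by $n\pi$ across $R_0$, and only its derivatives extend smoothly.
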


In fact, Proposition \ref{prop:exist} immediately follows from Theorem \ref{thm:main-1} (see also Section \ref{sec:symm}) and the following technical lemma whose proof is given in Appendix \ref{prf lem:cos}. 
\begin{lem} \label{lem:cos}
Under the assumptions of Proposition \ref{prop:exist}, we have $\omega_c \in C^{\infty}(\Rs)$.
\end{lem}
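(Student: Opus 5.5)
The plan is to reduce smoothness of $\omega_c$ on $\Rs$ to a periodicity property of a one-variable function, and then handle the one ray where $\wt\theta$ jumps. Write
\[
G(\theta):=\cos\Big(2\,\mathrm{am}\big(\lambda\theta+\sqrt{c-a}\,C,\,k\big)\Big),\qquad \lambda:=\sqrt{\tfrac{c-a}{6}},\quad k:=\sqrt{\tfrac{c-b}{c-a}}\in(0,1),
\]
so that $\omega_c=G\circ\wt\theta$. Since $0<k<1$, the Jacobi amplitude $\mathrm{am}(\cdot,k)$ is real-analytic on all of $\R$. Hence $G\in C^\omega(\R)$.

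The remark after Proposition \ref{prop:ext} says that $\wt\theta$ is smooth on $\Rs\setminus R_0$, and that its derivatives of every order $\ge 1$ extend smoothly to all of $\Rs$. Consequently $\omega_c\in C^\infty(\Rs\setminus R_0)$. It remains to prove smoothness across the ray $R_0$, where $\wt\theta$ jumps from values near $2\pi$ (coming from $Q_{IV}$) to values near $0$ (coming from $Q_I$).

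The key step is to show that $G$ is $2\pi$-periodic. The amplitude satisfies the quasi-periodicity
\[
\mathrm{am}(\mu+2K(k),k)=\mathrm{am}(\mu,k)+\pi .
\]
I would justify this from \eqref{eq:am}: the integrand $(1-k^2\sin^2\theta)^{-1/2}$ is $\pi$-periodic, and its integral over one period is $2K(k)$. Iterating gives
\[
\mathrm{am}(\mu+2nK,k)=\mathrm{am}(\mu,k)+n\pi ,
\]
and therefore $\cos\big(2\,\mathrm{am}(\mu+2nK,k)\big)=\cos\big(2\,\mathrm{am}(\mu,k)\big)$. Condition \eqref{cdt:K} says exactly $2\pi\lambda=2nK(k)$. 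So
\[
G(\theta+2\pi)=\cos\Big(2\,\mathrm{am}\big(\lambda\theta+\sqrt{c-a}\,C+2nK,\,k\big)\Big)=G(\theta)
\]
for all $\theta\in\R$. The factor $2$ inside the cosine is essential here: $\mathrm{am}$ itself gains $n\pi$ over one turn, and only $\cos(2\,\cdot)$ is invariant under that shift.

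With periodicity in hand, I would conclude as follows. Near any point $(x_0,0)$ with $x_0>0$, take a small ball $B$ on which the continuous branch $\vartheta(x,y)=\arctan(y/x)$ is defined and smooth. On $B\cap\{y>0\}$ we have $\wt\theta=\vartheta$. On $B\cap\{y<0\}$ we have $\wt\theta=\vartheta+2\pi$; this uses the identity $3\pi/2-\arctan(x/y)=2\pi+\arctan(y/x)$ for $x>0$, $y<0$. Periodicity of $G$ then gives $\omega_c=G\circ\vartheta$ on $B\setminus R_0$. The right-hand side is a composition of smooth maps on all of $B$, so $\omega_c$, extended to $R_0$ by $\omega_c(x_0,0):=G(0)$, is smooth there. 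This is also consistent with the definition \eqref{wt theta}, which assigns $\wt\theta=0$ on $R_0$.

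The main obstacle is the branch bookkeeping for $\wt\theta$ across $R_0$ and across the axes where its piecewise definition switches formulas. For the axes other than $R_0$, I would check via the identity $\arctan z+\arctan(1/z)=\mathrm{sgn}(z)\pi/2$ that the pieces of \eqref{wt theta} agree with one continuous angle function. Hence $\wt\theta$ is locally a smooth angle everywhere except on $R_0$, where it jumps by exactly $2\pi$.
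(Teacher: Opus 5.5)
Your proof is correct, and it takes a different, more economical route than the paper.

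\textbf{The paper's route.} It works derivative by derivative. First it checks that the one-sided limits of $\omega_c$ at $R_0$ agree, using \eqref{cdt:K} to see that the argument of $\mathrm{am}$ jumps by $2nK(k)$. Next it applies the Fa\`a di Bruno (``Bell'') formula to $g_0=\mathrm{am}(\cdots)$. This uses that the positive-order derivatives of $\wt\theta$ are smooth on $\Rs$ by \eqref{partial theta smooth}, and that $\mathrm{dn}$ and all its derivatives are $2K$-periodic. The upshot is matching one-sided limits for every positive-order derivative of $g_0$. Finally it applies the formula once more to $\cos(2g_0)$.

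\textbf{Your route.} You package the whole mechanism into one observation: $G(\theta)=\cos\bigl(2\,\mathrm{am}(\lambda\theta+\sqrt{c-a}\,C,k)\bigr)$ is $2\pi$-periodic. Near $R_0$ one has $\wt\theta=\vartheta$ or $\vartheta+2\pi$ for a smooth local branch $\vartheta$, so $\omega_c=G\circ\vartheta$ on a full neighbourhood.

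\textbf{What your route buys.}
\begin{itemize}
\item It avoids all higher-derivative combinatorics.
\item It does not need the gluing lemma the paper leaves implicit: a continuous function that is smooth off a line, with all derivatives extending continuously across it, is smooth.
\item It sidesteps the paper's loose claim that $g_0\in C^\infty(\Rs)$. In fact $g_0$ itself jumps by $n\pi$ across $R_0$, and only its positive-order derivatives match.
\item It gives real-analyticity of $\omega_c$ on $\Rs$ for free.
\end{itemize}
You also make explicit the quasi-periodicity $\mathrm{am}(\mu+2K,k)=\mathrm{am}(\mu,k)+\pi$ and the role of the factor $2$ in $\cos(2\,\cdot)$, which the paper uses tacitly when comparing its two limits.

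\textbf{What the paper's route buys.} It only ever uses the one-sided behaviour of the given piecewise $\wt\theta$ and never has to exhibit a local continuous branch of the angle. That branch bookkeeping is exactly what you verify with $\arctan z+\arctan(1/z)=\mathrm{sgn}(z)\pi/2$, so nothing is lost.
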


Let us conclude this subsection with a few remarks on Proposition \ref{prop:exist}.
\begin{itemize}
    \item One can easily verify the equality $\wt{\kappa}_3 =f_3 \circ \widetilde{\theta}$ for $f_3$ in \eqref{eq:fs}. 

    \item The equivalent form of the condition \eqref{cdt:K} has already been observed in \cite{guillod2015generalized},
    where the existence of the parameters $a,b,c$ depends on some flux condition.

\end{itemize}


\section{Local profiles of non-radial self-similar solutions}\label{sec:nonradial}

In this section, we consider non-radial self-similar solutions of the SNS equations \eqref{eq:NS_0}, i.e., \eqref{eq:0forceNS}, in a cone-shaped domain $\Omega \subset \Rs$ with vertex at the origin and \emph{not intersecting with the $y$-axis}. We first show that some non-radial self-similar solutions of the SNS equations can be obtained by reducing the problem to a Li\'enard equation, and then provide some explicit solutions expressed in terms of Weierstrass elliptic functions.

To this end, let us start with \eqref{eq:zua-c} for homogeneous forces, namely,
\begin{equation}\label{eq:C_0}
   (z^2+1)^2\widetilde{u}_{zz} + (6z-C_0)(z^2+1)\widetilde{u}_{z} 
+ (z^2+1)\widetilde{u}^2 + 6(z^2+1)\widetilde{u} 
+\frac{2C_0(z^3+3z)}{z^2+1}
+\frac{2C_1}{z^2+1}=0.
\end{equation}
Here $\widetilde{u}$ is defined by \eqref{eq:invs} and $C_0\neq 0$ is the real constant in \eqref{C_0-neq-0}.
Substituting $h := (z^2+1) \widetilde{u}$ in \eqref{eq:C_0}, we get 
 \begin{equation}\label{eq:h_C0}
  (z^{2}+1)^{2} h_{zz} 
  + (2z - C_{0})(z^{2} +1) h_z
  + h^{2} 
  + (2C_{0}z+4)h
  + 2C_{0}z^{3}+6C_{0}z+2C_1
  =0.
\end{equation}

Firstly, assume that
\begin{equation}\label{eq:con}
C_0^2 \leq 4-2C_1 .
\end{equation}
Under this condition, Eq.~\eqref{eq:h_C0} admits the following linear solution
\begin{equation}\label{eq:hL}
    h_{L}(z)=-C_0 z+\wt C_1,
\end{equation}
where we have set
\begin{equation}\label{eq:td_C1}
  \wt C_1:= -2+\sqrt{4-C_0^2-2C_1}
\quad \text{or} \quad
-2-\sqrt{4-C_0^2-2C_1}.  
\end{equation}
The linear solution \eqref{eq:hL} corresponds to the self-similar solution \eqref{sol:CC} (see also \eqref{eq:U_L})
\begin{equation}\label{eq:U_L1}
\begin{aligned}
u_{L}(x,y) := \frac{\wt C_1 x-C_0 y}{x^2+y^2}, \quad
v_{L}(x,y) := \frac{ \wt C_1 y +C_0 x}{x^2+y^2},\quad 
p_{L}(x,y) :=  - \frac{ \wt C_1^2+C_0^2}{2(x^2+y^2)}.
\end{aligned}
\end{equation}
Let us emphasize that $\wt C_1$ in \eqref{eq:U_L1} can be any real number due to \eqref{eq:td_C1}.
\medskip

To obtain more general non-radial self-similar solutions, by defining the difference 
\begin{equation} \label{eq:hhh1}
    h_r(z):=h(z)-h_L(z)
\end{equation}
and substituting it into  \eqref{eq:h_C0}, 
we obtain the equation for $h_r(z)$ as follows
\begin{equation}\label{eq:hr_pm1}
(z^{2}+1)^{2} \frac{d^2h_r}{dz^2} +(2z-C_{0})(z^{2}+1) \frac{d h_r}{dz }+h_{r}^{2} +2(\wt C_1+2)h_{r}=0.
\end{equation}
Without loss of generality, we focus on the case where $h_r (z)$ is nonlinear; otherwise, no new (nonlinear) solutions $h(z)$ can be obtained.
\smallbreak 

Next, let us denote $$H(\theta):=h_{r}(\tan \theta)$$ 
for $\theta:=\arctan\,z$. Then, \eqref{eq:hr_pm1} becomes  \eqref{eq:H_pm}, i.e.,
\begin{equation}\label{eq:Heq}
\frac{d^2 H}{d\theta^2}-C_{0}\frac{d H}{d\theta} + H^{2} + 2( \wt C_1 +2) \,H=0,
\end{equation}
which is a Li\'enard equation (see, e.g., \cite{strogatz2024nonlinear}). The integrability of Li\'enard-type equations has been well studied; in particular, it is integrable 
when 
\begin{equation}\label{eq:CC1}
  -\frac{ 2(\wt C_1 +2)}{C_0^2} = \pm \frac{6}{25},\quad \text{i.e.,}\quad  \wt C_1= -2\mp \frac{3C_0^2}{25}.
\end{equation}
As a consequence of \eqref{eq:td_C1}, we have
$$C_1=2- \frac{C_{0}^{2}}{2} - \frac{9 C_{0}^{4}}{1250}.$$
The integrable Li\'enard equation \eqref{eq:Heq} under condition \eqref{eq:CC1}, has been classified as type-II integrable ODEs in the Painlev\'e--Gambier classification. It is known that the Painlev\'e--Gambier type-II equations admit solutions expressible in terms of Weierstrass elliptic functions. In the following, we will show these special solutions; for further details on the Painlev\'e--Gambier classification, the reader may refer to, for example, \cite{kudryashov2017connections,ince1956ordinary}.

Under condition \eqref{eq:CC1}, the ODE \eqref{eq:Heq} becomes 
\begin{equation}\label{eq:Heq1}
\frac{d^2 H}{d\theta^2}-C_{0}\frac{d H}{d\theta} + H^{2} + \frac{6C_0^2}{25} \,H=0, \quad \text{i.e.,}\quad  \wt C_1= -2+ \frac{3C_0^2}{25},
\end{equation}
and
\begin{equation}\label{eq:Heq2}
\frac{d^2 H}{d\theta^2}-C_{0}\frac{d H}{d\theta} + H^{2} - \frac{6C_0^2}{25} \,H=0, \quad \text{i.e.,}\quad  \wt C_1= -2- \frac{3C_0^2}{25}.
\end{equation}
One can readily verify that the two equations are related by the transformation $H\mapsto H-6C_0^2/25$ (from \eqref{eq:Heq1} to \eqref{eq:Heq2}). Substituting solutions of either equation into \eqref{eq:hhh1}  yields the same solution $h(z)=h_L(z)+h_r(z)$ where $h_r(z)=H(\theta)$, and consequently leads to the same solution of the SNS equations \eqref{eq:NS_0}.

From now on, we will only study the ODE \eqref{eq:Heq1}, which has the first integral
\begin{equation}\label{eq:1stint}
   \left(\frac{d H}{d\theta} -\frac{2C_0}{5}H\right)^2 +\frac{2}{3}H^3=C_2 \exp\left(\frac{6C_0}{5}\theta\right), 
\end{equation}
where $C_2$ is an arbitrary constant. 
Consider the following transformations
\begin{equation*}
    H= -\frac{6C_0^2}{25} \tau^2 \,\mathfrak{P}(\tau), 
    \quad \tau = \exp\left( \frac{C_0}{5}\theta \right),
\end{equation*}
and the first integral \eqref{eq:1stint} becomes the following ODE:
\begin{equation}\label{eq:ell}
    \big(\mathfrak{P}'(\tau)\big)^2 = 4\,\mathfrak{P}^3(\tau) - g_3
\end{equation}
with $g_3$ an arbitrary real constant. The general solution of \eqref{eq:ell} is given by Weierstrass elliptic functions (see, e.g., \cite[Chap. 18]{AbramowitzStegun}), namely, 
$$\mathfrak{P}(\tau)=\wp(\tau+C;0,g_3)$$ 
with invariants $(0,g_3)$ and the integration constant $C\in \mathbb{R}$. This then yields the general solution of \eqref{eq:Heq1}, that is,
\begin{equation}\label{eq:HHH}
     H(\theta) = -\frac{6C_0^2}{25} \exp\left(\frac{2C_0}{5}\theta \right) \cdot \wp \left( \exp\left(\frac{C_0}{5}\theta \right)+C;0,g_3 \right).
\end{equation}

Finally, we obtain a non-radial solution of the SNS equations \eqref{eq:NS_0} as follows
\begin{equation}\label{eq:uuuvvv}
    \begin{aligned}
            u(x,y)&=u_L(x,y) -\frac{6C_0^2x}{25(x^2+y^2)} \exp\left(\frac{2C_0}{5}\arctan \frac{y}{x} \right) \cdot \wp \left( \exp\left(\frac{C_0}{5}\arctan \frac{y}{x} \right) +C;0,g_3 \right),\\
        v(x,y)&=v_L(x,y) -\frac{6C_0^2y}{25(x^2+y^2)} \exp\left(\frac{2C_0}{5}\arctan \frac{y}{x} \right) \cdot  \wp \left( \exp\left(\frac{C_0}{5}\arctan\frac{y}{x} \right) +C;0,g_3 \right),
    \end{aligned}
\end{equation}
where $u_{L}$ and $v_{L}$ are given by \eqref{eq:U_L1} with $\wt C_1=-2+ 3C_0^2/25$. 
 Particularly when $g_3=0$, the Weierstrass elliptic function \(\wp(\tau+C;0,0)\) degenerates to \((\tau+C)^{-2}\) up to the choice of a real constant $C,$ and the solution \eqref{eq:uuuvvv} becomes
\begin{equation}\label{eq:uv-special21-exp}
    \begin{aligned}
        u(x,y) &= u_{L}(x,y)-\frac{6C_{0}^{2}x}{25(x^2+y^2)} \frac{1}{\Big( 1+C\exp\big(-\frac{C_0}{5}\arctan\,(y/x) \big)\Big)^2}, \\
        v(x,y) &= v_{L}(x,y)-\frac{6C_{0}^{2}y}{25(x^2+y^2)} \frac{1}{\Big( 1+C\exp\big(-\frac{C_0}{5}\arctan\,(y/x) \big)\Big)^2}.
    \end{aligned}
\end{equation}

\begin{remark} 
 In the above, we obtained the solution structure of the reduced ODE \eqref{eq:h_C0} when a special linear solution exists under the condition \eqref{eq:con}, and all its solutions can be associated with the Li\'enard equation \eqref{eq:Heq}. However, if $C_0^2 >4-2C_1$ and if \eqref{eq:h_C0} admits a nonlinear special solution $h_s(z)$, then $h_r(z)=h(z)-h_s(z)$ satisfies 
   \begin{equation}\label{eq:hr_hs}
(z^{2}+1)^{2} \frac{d^2h_r}{dz^2} +(2z-C_{0})(z^{2}+1) \frac{d h_r}{dz }+h_{r}^{2} +(2h_s + 2 C_0 z+4 )h_{r}=0.
\end{equation}
Similarly, by defining $H(\theta):=h_{r}(\tan \theta)$ 
with $\theta:=\arctan\,z$, \eqref{eq:hr_hs} becomes a nonautonomous Li\'enard equation:
\begin{equation*}
\frac{d^2 H}{d\theta^2}-C_{0}\frac{d H}{d\theta} + H^{2} 
+ \big(2h_s(\tan \theta)+ 2 C_0 \tan \theta +4 \big) \,H=0.
\end{equation*}
Other special solutions may be obtained, but they are beyond the scope of the present paper.
\end{remark}

\section{Conclusion}

In this paper, we studied self-similar solutions of the planar SNS equations \eqref{eq:NS_0} without external forces. Motivated by the scaling invariance \eqref{eq:scasym}, we focused on homogeneous solutions of order $-1$, and worked on the punctured plane $\R^2_*=\R^2\setminus{(0,0)}$ to avoid the strong singularity of the convection term $\dv(\bU\otimes\bU)$ at the origin (cf. \cite[p.~7]{sverak2011landau}).

Our main results provide a complete description of smooth \emph{radial} self-similar profiles with respect to the scaling symmetry in general conical domains $\Omega\subset\R^2_*$, without prescribing boundary conditions on $\partial\Omega$. Under the radial constraint $xv=yu$ \eqref{cdt:xnot0}, \eqref{eq:NS_0} reduces to an integrable one-dimensional ODE, which can be solved in closed form. In particular, Theorem~\ref{thm:main-1} classifies all local smooth radial self-similar solutions in “small” cones (opening angle $\alpha(\Omega)\leq \pi$): every such solution is explicitly represented by a profile function $\kappa$ listed in Table~\ref{tab:kap}. Theorem~\ref{thm:main-2} further shows that the corresponding profiles in “large” cones (opening angle $\alpha(\Omega)\in (\pi,2\pi]$) are obtained by real-analytic continuation of those in Theorem~\ref{thm:main-1}. Moreover, whenever these local solutions can be smoothly extended to the whole plane, they recover precisely the classical radial Jeffery--Hamel solutions. This provides a complementary point of view to the classical Jeffery--Hamel literature and, in particular, supplements the results in \cite{bang2024self,fraenkel1962laminar,sverak2011landau,guillod2015generalized}.

Beyond the radial regime, the structure of genuinely \emph{non-radial} self-similar solutions is substantially more delicate. When $xv=yu+C_0$ with $C_0\neq0$ (i.e., \eqref{cdt:C_0}), we showed that a particular class of non-radial self-similar solutions can be represented in terms of an angular profile $H(\theta)$ satisfying the Li\'enard equation \eqref{eq:H_pm} (see Theorem \ref{thm:abel-reduction}). Although this equation is not integrable in general, we constructed a special solution \eqref{eq:HHH} expressed in terms of Weierstrass elliptic functions, which in turn yields the solution \eqref{eq:uuuvvv} of the SNS equations.

Several directions remain open. A natural extension is to incorporate homogeneous external forces into the stationary framework, which may generate new self-similar classes; some preliminary reductions in this direction are discussed in Section~\ref{sec:symm}. Another important direction is to address the non-radial configurations not covered by our explicit constructions. Especially, when \eqref{eq:h_C0} has no linear solutions, it is unclear whether one can still obtain exact self-similar solutions to the SNS equations.

\section*{Acknowledgments}
LP is partially supported by JSPS
KAKENHI (JP24K06852), JST CREST (JPMJCR24Q5), and Keio University (Fukuzawa Fund). 
PZ is partially  supported by National Key R$\&$D Program of China under grant 2021YFA1000800 and by National Natural Science Foundation of China under Grants  No. 12421001, No. 12494542 and No. 12288201.
XZ was supported by the Fundamental Research Funds for the Central Universities. 

LP and XZ gratefully acknowledge the support and hospitality of Tongji University and Keio University, where a significant portion of this manuscript was prepared.



\begin{appendices}

\section{Proof of Lemma \ref{lem:cos}}\label{prf lem:cos}
In this section, we prove Lemma \ref{lem:cos}. Firstly, we prove that $\omega_c \in C^{0}(\Rs)$.
For simplicity, let us denote 
\begin{equation*}
g_0(x,y):= 
           \mathrm{am}\left(
                   \sqrt{\dfrac{c-a}{6}} 
                   \wt \theta(x,y)+\sqrt{c-a}C , k_0 
                    \right), \quad k_0=\sqrt{\dfrac{c-b}{c-a}}
\end{equation*}
for any $(x,y)\in D:=\Rs\setminus R_0$. 
Notice that the auxiliary function $\wt\theta$ defined in \eqref{wt theta} belongs to $C^{\infty}(D)$ and then $g_0 \in C^{\infty}(D)$.
Hence, it suffices to verify that 
\begin{equation}\label{eq:omega_sc_1}
\lim_{y\to 0^{+}}  \omega_c (x,y)=   \lim_{y\to 0^{-}}  \omega_c (x,y)=\omega_c(x,0),\quad x>0.
\end{equation}
 To this end, we can easily see from the condition \eqref{cdt:K} that 
\begin{equation*}
    \begin{aligned}
        \lim_{y\to 0^+} g_0(x,y)
    &=\mathrm{am}\left(\sqrt{c-a}C , k_0 \right),\\
         \lim_{y\to 0^-} g_0(x,y)
         &=\mathrm{am}\left(\sqrt{c-a} \,C+2nK(k_0), k_0 \right),
    \end{aligned}
\end{equation*}
which imply
\begin{equation}\label{eq:omega_sc_1_1}
\begin{aligned}
\lim_{y\to 0^{+}} \omega_c(x,y) 
&=\cos \left( 2\, \mathrm{am}\left(\sqrt{c-a} \,C, k_0 \right)\right),\\
\lim_{y\to 0^{-}} \omega_c(x,y)
&=\cos \left( 2\, \mathrm{am}\left(\sqrt{c-a} \,C+2nK(k_0), k_0 \right)\right).
\end{aligned}
\end{equation}
Therefore, \eqref{eq:omega_sc_1} follows from \eqref{eq:omega_sc_1_1}.
\smallbreak 

Secondly, we prove the continuity of partial derivatives of $\omega_c$.
In view of the computation in \eqref{partial theta smooth}, we observe that 
$\pd_{x,y}^{\alpha} \wt \theta (x,y) \in C^{\infty}(\Rs)$ 
for any multi-index $\alpha\in \BN_0^2$ with $|\alpha| \geq 1$. 
Moreover, Bell's formula implies that 
\begin{equation*}
\begin{aligned}
   \pd_{x,y}^{\alpha} g_0 (x,y) &= \sum_{\ell=1}^{|\alpha|} 
   \frac{\pd^{\ell}}{\pd \mu^{\ell}} 
   \am (\mu,k_0) \Big|_{\substack{\mu =\sqrt{\left(c-a\right)/6} \,\wt \theta(x,y) +\sqrt{c-a}}C} \\  
   & \qquad \times \left( 
 \sum_{\substack{\alpha_1+\cdots+\alpha_{\ell}=\alpha \\|\alpha_1|,\dots,|\alpha_{\ell}|\geq 1}} 
    C_{\alpha_1,\dots,\alpha_{\ell}}^{\ell}
    \pd_{x,y}^{\alpha_1} \wt \theta(x,y)  \cdots \pd_{x,y}^{\alpha_{\ell}}  \wt \theta(x,y)  \right),
\end{aligned}
\end{equation*}
where the constant $C_{\alpha_1,\dots,\alpha_{\ell}}^{\ell}$ depends on $\alpha_1,\dots,\alpha_{\ell}$.
Moreover, for fixed $k_0$, $\operatorname{dn}\,(\mu,k_0)=\pd \operatorname{am}\,(\mu,k_0)/\pd \mu$ is a periodic function with
a period $2K(k_0)$, and so are the higher-order derivatives of $\operatorname{dn}$. 
Therefore, analogously to the proof of \eqref{eq:omega_sc_1}, we obtain from \eqref{cdt:K} that  
\begin{equation*}
    \begin{aligned}
       &\lim_{y\to 0^{+}} \left( \frac{\pd^{\ell'}}{\pd u^{\ell'}} \dn  \right) (\mu,k_0)\Big|_{\substack{\mu = \sqrt{\left(c-a\right)/6} \, \wt\theta(x,y)+\sqrt{c-a}}C}\\
=&\left( \frac{\pd^{\ell'}}{\pd u^{\ell'}} \dn  \right)\left(\sqrt{c-a} \,C, k_0 \right)  \\
=&\lim_{y\to 0^{-}} \left( \frac{\pd^{\ell'}}{\pd \mu^{\ell'}} \dn  \right) (\mu,k_0)\Big|_{\substack{\mu = \sqrt{\left(c-a\right)/6}\,\wt\theta(x,y)+\sqrt{c-a}}C},
    \end{aligned}
\end{equation*}
which yields $g_0\in C^{\infty} (\Rs)$.
\smallbreak 

Finally, it is straightforward to show that 
\begin{equation*}
\begin{aligned}
     \pa_{x,y}^{\alpha} \omega_c (x,y)
     &= \sum_{|\ell|=1}^{|\alpha|} 2^{\ell}\cos \left(2g_0(x,y)+\frac{\ell\pi}{2}\right) \\
     & \qquad \times\left( 
     \sum_{\substack{\alpha_1+\cdots+\alpha_{\ell}=\alpha\\
     |\alpha_1|,\dots,|\alpha_{\ell}|\geq 1}} 
    C_{\alpha_1,\dots,\alpha_{\ell}}^{\ell}
    \pd_{x,y}^{\alpha_1} g_0(x,y) \cdots  \pd_{x,y}^{\alpha_{\ell}}  g_0(x,y)  \right) \in C^{0}(\Rs),
\end{aligned}
\end{equation*}
which implies $\omega_c\in C^{\infty}(\Rs)$.
This completes the proof of Lemma \ref{lem:cos}.

\end{appendices}

%
%
%
%

\end{document}